\newtheorem{theorem}{Theorem}
\newtheorem{corollary}{Corollary}
\newtheorem{definition}{Definition}
\newtheorem{lemma}{Lemma}
\newtheorem{remark}{Remark}
\numberwithin{equation}{section}
\begin{document}
\title[Chern-Simons forms for $\mathbb{R}$-linear connections]{Chern-Simons
forms for $\mathbb{R}$-linear connections on\\
Lie algebroids}
\author{Bogdan Balcerzak}
\date{}
\maketitle

\begin{abstract}
The Chern-Simons forms for $\mathbb{R}$-linear connections on Lie algebroids
are considered. A generalized Chern-Simons formula for such $\mathbb{R}$%
-linear connections is obtained. We it apply to define Chern character and
secondary characteristic classes for $\mathbb{R}$-linear connections of Lie
algebroids.
\end{abstract}

\renewcommand{\thefootnote}{}\footnote{\hspace{-0.6cm}2010 \emph{Mathematics
Subject Classification}: Primary 53C05; Secondary 58H05, 17B56.}\footnote{%
\hspace{-0.6cm}\emph{Key words and phrases}: Lie algebroid, connections, Lie
algebroid cohomology, Chern-Simons forms.}

\section{Introduction}

We observe that non-linear objects (forms, connections, mappings between
modules of cross-sections of vector bundles, which are non--linear over a
ring of smooth functions) have increasing meaning in problems of
differential geometry. S.~Evens, J.~H.~Lu and A.~Weinstein considered
especial non--linear connections of Lie algebroids called connections up to
homotopy (see \cite{Evens-Lu-Weinstein}). Crainic and Fernandes \cite%
{Crainic-up to homotopy}, \cite{Crainic-Fernandes-jets} introduce the Chern
character for non--linear connections. They discuss non--linear forms on Lie
algebroids with values in a super vector bundle as antisymmetric,
multilinear maps over $\mathbb{R}$ (not necessarily multilinear over the
ring of smooth functions), which have a local property. Every non--linear
connection $\nabla $ establishes on non--linear forms the covariant
derivative operator. If $\nabla $ is flat, the Chern character vanishes and
the induced covariant derivative operator is the exterior derivative, and in
classically way defines the cohomology space. Crainic and Fernandes
introduced secondary characteristic classes for connections up to homotopy 
\cite{Crainic-up to homotopy}, \cite{Crainic-Fernandes-jets}. We stay the
question whenever these ideas refer to $\mathbb{R}$-linear forms and $%
\mathbb{R}$-linear connections -- meaning as objects for which it is not
supposed a local property. In the paper, using the generalized Stokes
formula for $\mathbb{R}$-linear connections on Lie algebroids, we prove the
Chern-Simons transgression formula without assumption locality for $\mathbb{R%
}$-linear connections. This is a helpful starting point to define
characteristic classes for $\mathbb{R}$-linear connections on Lie
algebroids. Some Crainic and Fernandes ideas we use to extend notions of
Chern character and exotic (secondary) characteristic classes to $\mathbb{R}$%
-linear objects. Moreover, we found some explicit formulae for $\mathbb{R}$%
-linear Chern-Simons forms. In particular, we gain an direct formula of
exotic (secondary) characteristic classes for an $\mathbb{R}$-linear
connection as some trace $\mathbb{R}$-linear forms on a Lie algebroid.

A \emph{Lie algebroid} is a trip $\left( A,\rho _{A},[\![\bullet ,\bullet
]\!]_{A}\right) $, in which $A$ is a real vector bundle over a manifold $M$, 
$\rho _{A}:A\rightarrow TM$ (called an \emph{anchor}) is a homomorphism of
vector bundles, $\left( \Gamma \left( A\right) ,[\![\bullet ,\bullet
]\!]_{A}\right) $ is an $\mathbb{R}$-Lie algebra and the Leibniz identity%
\begin{equation*}
\lbrack \![a,f\cdot b]\!]_{A}=f\cdot \lbrack \![a,b]\!]_{A}+\rho _{A}\left(
a\right) \left( f\right) \cdot b\ \ \ \ \ \text{for all\ \ \ \ }a,b\in
\Gamma \left( A\right) ,\ f\in 
\mathscr{C}%
^{\infty }\left( M\right)
\end{equation*}%
holds. Since the representation $\varrho :%
\mathscr{C}%
^{\infty }\left( M\right) \rightarrow \limfunc{End}\nolimits_{%
\mathscr{C}%
^{\infty }\left( M\right) }\left( \Gamma \left( A\right) \right) $, $\varrho
\left( \nu \right) \left( a\right) =\nu \cdot a$, $\nu \in 
\mathscr{C}%
^{\infty }\left( M\right) $,$\ a\in \Gamma \left( A\right) $, is faithful (%
\cite{Herz}, see also \cite{B-K-W-Primary}), the anchor induces a
homomorphism of Lie algebras $\limfunc{Sec}\rho _{A}:\Gamma \left( A\right)
\rightarrow 
\mathscr{X}%
\left( M\right) $, $a\mapsto \rho _{A}\circ a$. If $\rho _{A}$ is a constant
rank (i.e. $\func{Im}\rho _{A}$ is a constant dimensional and completely
integrable distribution), we say that $\left( A,\rho _{A},[\![\bullet
,\bullet ]\!]_{A}\right) $ is \emph{regular}. A tangent bundle $TM$ to a
manifold $M$ with the identity as an anchor and the bracket of vector fields
is an elementary example of a Lie algebroid. For more about Lie algebroids
and their properties we refer for example to \cite{Mackenzie}, \cite%
{Higgins-Mackenzie}, \cite{Kubarski-Lyon}, \cite{Fernandes}, \cite%
{B-K-W-Primary}, \cite{Crainic-Fernandes-jets}.

There are Lie functors from many geometric categories to the category of Lie
algebroids (see a long list eg in \cite{Mackenzie}, \cite{Kubarski-Lyon}).
Especially meaning in the paper have algebroids of vector bundles. We recall
that the module $%
\mathscr{CDO}%
\left( E\right) $ of sections of the Lie algebroid $\limfunc{A}\left(
E\right) $ of a vector bundle $E$ is the space of all covariant differential
operators in $E$, i.e. $\mathbb{R}$-linear operators $\ell :\Gamma \left(
E\right) \rightarrow \Gamma \left( E\right) $ such that there exists exactly
one $\widetilde{\ell }\in 
\mathscr{X}%
\left( M\right) $ with $\ell \left( f\zeta \right) =f\ell \left( \zeta
\right) +\widetilde{\ell }\left( f\right) \zeta $ for all $f\in 
\mathscr{C}%
^{\infty }\left( M\right) $ and $\zeta \in \Gamma \left( E\right) $; see for
example \cite{Teleman}, \cite{Mackenzie}, \cite{Kubarski-Lyon}.

Let $\left( A,\rho _{A},[\![\bullet ,\bullet ]\!]_{A}\right) $\ and $\left(
B,\rho _{B},[\![\bullet ,\bullet ]\!]_{B}\right) $\ be Lie algebroids over
the same manifold $M$. A homomorphism $\nabla :A\rightarrow B$ of vector
bundles is called an $A$-\emph{connection} in $B$ if $\rho _{B}\circ \nabla
=\rho _{A}$ (see \cite{B-K-W-Primary}). If an $A$-connection $\nabla $ in $B$
is a homomorphism of Lie algebroids ($\nabla $ preserves the Lie brackets)
we say that $\nabla $ is \emph{flat}. The notion of an $A$-connection in $B$
generalizes the known notions of connections (for example usual and partial
covariant derivatives in vector bundles, a connection in principal bundles,
a connection in extensions of Lie algebroids). In the case where $A=TM$ and $%
B=\limfunc{A}\left( E\right) $ is an algebroid of a vector bundle $E$, $TM$%
-connections in $\limfunc{A}\left( E\right) $ are one--to--one with
covariant derivatives in $E$. For an arbitrary Lie algebroid $A$ and $B=%
\limfunc{A}\left( E\right) $ we have $A$-connections of $E$ considered in 
\cite{Mackenzie}, \cite{Fernandes}, \cite{Crainic-Fernandes-jets}. In case $%
B=\limfunc{A}\left( P\right) $ is a Lie algebroid of a principal bundle $P$,
we get $A$-connections in $P$. In Poisson geometry an especially rule have
connections acting from a Lie algebroid $T^{\ast }M$ associated to a given
Poisson structure. In these examples a connection $\nabla $ considered as a
mapping on modules of cross-sections is linear over $%
\mathscr{C}%
^{\infty }\left( M\right) $.

By an $\mathbb{R}$\emph{-linear connection\ }of $A$\emph{\ }in $B$\emph{\ }%
we called an $\mathbb{R}$-linear operator $\nabla :\Gamma \left( A\right)
\rightarrow \Gamma \left( B\right) $\emph{\ }such that 
\begin{equation*}
\limfunc{Sec}\rho _{B}\circ \nabla =\limfunc{Sec}\rho _{A}\text{.}
\end{equation*}%
An $\mathbb{R}$-linear connection of $A$ in the Lie algebroid $A\left(
E\right) $ is called the $\mathbb{R}$\emph{-linear connection of }$A$\emph{\
on the vector bundle} $E$. We call the map%
\begin{equation*}
R^{\nabla }:\Gamma \left( A\right) \times \Gamma \left( A\right) \rightarrow
\Gamma \left( B\right) ,\ \ R^{\nabla }\left( \alpha ,\beta \right)
=[\![\nabla _{\alpha },\nabla _{\beta }]\!]_{B}-\nabla _{\lbrack \![\alpha
,\beta ]\!]_{A}}
\end{equation*}%
a \emph{curvature} of $\nabla $. We see that $\nabla :\Gamma \left( A\right)
\rightarrow \Gamma \left( B\right) $ is flat if $R^{\nabla }=0$. For every
Lie algebroid $A$, the adjoint connection $\limfunc{ad}:\Gamma \left(
A\right) \rightarrow 
\mathscr{CDO}%
\left( A\right) $, $\limfunc{ad}\left( a\right) =[\![a,\bullet ]\!]_{A}$ is
an $\mathbb{R}$-linear connection of $A$ on $A$. The notion of an $\mathbb{R}
$-linear connection includes so-called non-linear connections and
connections up to homotopy on super-vector bundles (\cite{Crainic-up to
homotopy}, \cite{Crainic-Fernandes-jets}, \cite{Evens-Lu-Weinstein}); such
connections have a local property.

Let $\left( A,\rho _{A},[\![\bullet ,\bullet ]\!]_{A}\right) $, $\left(
B,\rho _{B},[\![\bullet ,\bullet ]\!]_{B}\right) $\ be Lie algebroids over a
manifold $M$. An $\mathbb{R}$-multilinear, antisymmetric map 
\begin{equation*}
\omega :\underset{n}{\underbrace{\Gamma \left( A\right) \times \cdots \times
\Gamma \left( A\right) }}\longrightarrow \Gamma \left( B\right) 
\end{equation*}%
is called an $\mathbb{R}$\emph{-linear }$n$-\emph{form} on $A$ with values
in $B$. The space of all such $\mathbb{R}$-linear $n$-forms will be denoted
by $\mathcal{A}lt_{\mathbb{R}}^{n}\left( \Gamma \left( A\right) ;\Gamma
\left( B\right) \right) $, and the space of $\mathbb{R}$-linear forms on $A$
with values in $B$ by%
\begin{equation*}
\mathcal{A}lt_{\mathbb{R}}^{\bullet }\left( \Gamma \left( A\right) ;\Gamma
\left( B\right) \right) =\bigoplus\limits_{k\geq 0}\mathcal{A}lt_{\mathbb{R}%
}^{k}\left( \Gamma \left( A\right) ;\Gamma \left( B\right) \right) ,
\end{equation*}%
where $\mathcal{A}lt_{\mathbb{R}}^{0}\left( \Gamma \left( A\right) ;\Gamma
\left( B\right) \right) =\Gamma \left( B\right) $.{\huge \ }Observe that if $%
\nabla :A\rightarrow B$ is an arbitrary $\mathbb{R}$-linear connection, then
the curvature $R^{\nabla }$ is an element of $\mathcal{A}lt_{\mathbb{R}%
}^{2}\left( \Gamma \left( A\right) ;\Gamma \left( B\right) \right) $. We
define the covariant differential operator%
\begin{equation*}
d_{\mathbb{R}}^{\nabla }:\mathcal{A}lt_{\mathbb{R}}^{\bullet }\left( \Gamma
\left( A\right) ;\Gamma \left( B\right) \right) \longrightarrow \mathcal{A}%
lt_{\mathbb{R}}^{\bullet +1}\left( \Gamma \left( A\right) ;\Gamma \left(
B\right) \right) 
\end{equation*}%
for $\mathbb{R}$-linear forms on $A$ with values in $B$ by the classical
formula%
\begin{multline*}
\left( d_{\mathbb{R}}^{\nabla }\eta \right) \left( a_{1},\ldots
,a_{n+1}\right) =\dsum\limits_{i=1}^{n+1}\left( -1\right) ^{i+1}\nabla
_{a_{i}}\left( \eta \left( a_{1},\ldots \hat{\imath}\ldots ,a_{n+1}\right)
\right)  \\
+\dsum\limits_{i<j}\left( -1\right) ^{i+j}\eta \left( \lbrack
\![a_{i},a_{j}]\!]_{A},a_{1},\ldots \hat{\imath}\ldots \hat{\jmath}\ldots
,a_{n+1}\right) .
\end{multline*}%
$d_{\mathbb{R}}^{\nabla }$ is an antiderivation in $\mathcal{A}lt_{\mathbb{R}%
}^{\bullet }\left( \Gamma \left( A\right) ;\Gamma \left( E\right) \right) $
with respect to the product of $\mathbb{R}$-linear forms. A flat $\mathbb{R}$%
-linear connection $\nabla :\Gamma \left( A\right) \rightarrow \Gamma \left(
B\right) $ induces, denoted by $H_{\nabla ,\mathbb{R}}^{\bullet }\left(
A;B\right) $, the \emph{Lie algebroid }$\mathbb{R}$\emph{-cohomology space
with coefficients in} $B$ as the cohomology space of the complex $\left( 
\mathcal{A}lt_{\mathbb{R}}^{\bullet }\left( \Gamma \left( A\right) ;\Gamma
\left( B\right) \right) ,d_{\mathbb{R}}^{\nabla }\right) $.

The differential operator $d_{\mathbb{R}}^{\limfunc{Sec}\rho _{A}}$ induced
by the anchor, i.e. by the flat{\Huge \ }$A$-connection in $TM$, will be
denoted by $d_{A,\mathbb{R}}$. Since modules $\Gamma \left( M\times \mathbb{R%
}\right) $ and $%
\mathscr{C}%
^{\infty }\left( M\right) $ are isomorphic, it follows that $d_{A,\mathbb{R}}
$ is an extension of the exterior derivative from the space $\Omega
^{\bullet }\left( A\right) $ of ($%
\mathscr{C}%
^{\infty }\left( M\right) $-linear) differential forms on $A$ to $\mathcal{A}%
lt_{\mathbb{R}}^{\bullet }\left( \Gamma \left( A\right) ;%
\mathscr{C}%
^{\infty }\left( M\right) \right) $.

Let us recall that the cohomology space of the complex $\left( \Omega
^{\bullet }\left( A\right) ,d_{A}\right) $ where $\Omega ^{\bullet }\left(
A\right) $\text{ is}\newline
\text{the space of all }$%
\mathscr{C}%
^{\infty }\left( M\right) $-linear \text{forms on }$A$, $d_{A}=\left. d_{%
\mathbb{R}}^{\limfunc{Sec}\rho _{A}}\right\vert \Omega ^{\bullet }\left(
A\right) :\Omega ^{\bullet }\left( A\right) \rightarrow \Omega ^{\bullet
+1}\left( A\right) $, is called the \emph{cohomology of Lie algebroid} and
is denoted by $H^{\bullet }\left( A\right) $.

Let $E$ be a vector bundle over $M$. Observe that $\mathcal{A}lt_{\mathbb{R}%
}^{\bullet }\left( \Gamma \left( A\right) ;\Gamma \left( \func{End}E\right)
\right) $ is a left module over the algebra $\mathcal{A}lt_{\mathbb{R}%
}^{\bullet }\left( \Gamma \left( A\right) ;%
\mathscr{C}%
^{\infty }\left( M\right) \right) $ with the standard multiplication of
forms. Moreover, 
\begin{equation*}
\mathcal{A}lt_{\mathbb{R}}^{\bullet }\left( \Gamma \left( A\right) ;%
\mathscr{C}%
^{\infty }\left( M\right) \right) \otimes _{%
\mathscr{C}%
^{\infty }\left( M\right) }\Gamma \left( \func{End}E\right) \cong \mathcal{A}%
lt_{\mathbb{R}}^{\bullet }\left( \Gamma \left( A\right) ;\Gamma \left( \func{%
End}E\right) \right)
\end{equation*}%
as $%
\mathscr{C}%
^{\infty }\left( M\right) $-modules by the isomorphism defined in such a way
that%
\begin{equation*}
\omega \otimes \phi \longmapsto \omega \wedge \phi .
\end{equation*}

In the paper, we define the Chern-Simons forms for $\mathbb{R}$-linear
connections on Lie algebroids. The generalized Chern-Simons formula is
derived as a consequence of Stokes' formula for $\mathbb{R}$-linear forms.
The notion of the Chern classes of a vector bundle as cohomology classes of
some $\mathbb{R}$-linear Chern-Simons forms is proposed. We show that such
classes for a given $\mathbb{R}$-linear connection{\Huge \ }do not depend on
the choice of the connection. In the paper, we discuss the wider then in 
\cite{Crainic-Fernandes-jets} for linear connections set of obstructions to
the existence of a flat connection of a given Lie algebroid.

Using ideas form papers Crainic and Fernandes, we introduce the secondary
characteristic classes for arbitrary $\mathbb{R}$-linear connections of Lie
algebroids in vector bundles. If an $\mathbb{R}$-linear $A$-connection $%
\nabla $ on a vector bundle $E$ is metrizable with respect to any metric $h$
in $E$ (i.e. $\nabla h=0$), the defined secondary characteristic classes
vanishes. Therefore, secondary characteristic classes of $\nabla $ are
obstructions to the existence of an invariant metric with respect to $\nabla 
$. In \cite{Crainic-Fernandes-jets} were considered connections up to
homotopy (some non-linear connections with a local property). Here we
examine all $\mathbb{R}$-linear connections. At the end of the last section
we derive some comments on the Chern-Simons forms for $\mathbb{R}$-linear
connections (in particular for Lie algebroids over odd dimensional
manifolds).

\section{The Chern-Simons transgression forms on Lie algebroids and the
Chern Character}

Let $\left( A,\rho _{A},[\![\cdot ,\cdot ]\!]_{A}\right) $ be a Lie
algebroid on a manifold $M$, $E$ a vector bundle over $M$, $k$ a natural
number and $\func{pr}_{2}:\mathbb{R}^{k}\times M\rightarrow M$\ a projection
on the second factor. Consider an $\mathbb{R}$-linear connection $\nabla
:\Gamma \left( A\right) \rightarrow 
\mathscr{CDO}%
\left( E\right) $ of $A$ on $E$. The standard fibrewise trace $\func{Tr}%
:\Gamma \left( \func{End}E\right) \rightarrow 
\mathscr{C}%
^{\infty }\left( M\right) $ on $\func{End}\left( E\right) $ induces a trace 
\begin{equation*}
\func{Tr}_{\ast }:\mathcal{A}lt_{\mathbb{R}}^{\bullet }\left( \Gamma \left(
A\right) ;\Gamma \left( \func{End}E\right) \right) \longrightarrow \mathcal{A%
}lt_{\mathbb{R}}^{\bullet }\left( \Gamma \left( A\right) ;%
\mathscr{C}%
^{\infty }\left( M\right) \right)
\end{equation*}%
such that $\func{Tr}_{\ast }\left( \omega \right) \left( a_{1},\ldots
,a_{n}\right) =\func{Tr}\left( \left( \omega \right) \left( a_{1},\ldots
,a_{n}\right) \right) $. Set (for $p\geq 1$)%
\begin{equation*}
\func{ch}_{p}\left( \nabla \right) =\func{Tr}_{\ast }\left( R^{\nabla
}\right) ^{p}\in \mathcal{A}lt_{\mathbb{R}}^{2p}\left( \Gamma \left(
A\right) ;%
\mathscr{C}%
^{\infty }\left( M\right) \right)
\end{equation*}%
where $\left( R^{\nabla }\right) ^{p}\in \mathcal{A}lt_{\mathbb{R}%
}^{2p}\left( \Gamma \left( A\right) ;\Gamma \left( \func{End}E\right)
\right) $ is, for $a_{1},...,a_{2p}\in \Gamma \left( A\right) $, given by%
\begin{equation*}
\left( R^{\nabla }\right) ^{p}\left( a_{1},...,a_{2p}\right) =\frac{1}{2^{p}}%
\sum\nolimits_{\tau \in S_{2p}}\func{sgn}\tau \cdot R_{a_{\tau \left(
1\right) },a_{\tau \left( 2\right) }}^{\nabla }\circ \cdots \circ R_{a_{\tau
\left( 2p-1\right) },a_{\tau \left( 2p\right) }}^{\nabla }.
\end{equation*}%
The $2p$-form $\func{ch}_{p}\left( \nabla \right) $ is called the \emph{%
Chern character form }associated to $\nabla $.

\begin{lemma}
\label{comm_Tr_and_diff}$d_{A,\mathbb{R}}\circ \func{Tr}_{\ast }=\func{Tr}%
_{\ast }\circ d_{\mathbb{R}}^{\overline{\nabla }}$ where $\overline{\nabla }%
:\Gamma \left( A\right) \rightarrow 
\mathscr{CDO}%
\left( \limfunc{End}E\right) $,\emph{\ }$\overline{\nabla }_{a}=\left[
\nabla _{a},\bullet \right] $.
\end{lemma}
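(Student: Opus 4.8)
The plan is to verify the operator identity by evaluating both sides on an arbitrary $\omega \in \mathcal{A}lt_{\mathbb{R}}^{n}(\Gamma(A);\Gamma(\func{End}E))$ and arbitrary sections $a_{1},\dots ,a_{n+1}\in \Gamma(A)$, using the explicit Koszul-type formula defining $d_{\mathbb{R}}^{\overline{\nabla}}$ together with the same formula for $d_{A,\mathbb{R}}=d_{\mathbb{R}}^{\limfunc{Sec}\rho_{A}}$ recalled above. Writing $\func{Tr}_{\ast}\big((d_{\mathbb{R}}^{\overline{\nabla}}\omega)(a_{1},\dots ,a_{n+1})\big)$ and $(d_{A,\mathbb{R}}\func{Tr}_{\ast}\omega)(a_{1},\dots ,a_{n+1})$ side by side, one sees that the ``bracket'' terms $\sum_{i<j}(-1)^{i+j}\func{Tr}\big(\omega([\![a_{i},a_{j}]\!]_{A},a_{1},\dots \hat{\imath}\dots \hat{\jmath}\dots ,a_{n+1})\big)$ coincide verbatim, because $\func{Tr}$ is $\mathscr{C}^{\infty}(M)$-linear and is simply post-composed with $\omega$. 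Hence the whole statement reduces to the termwise identity
\[
\limfunc{Sec}\rho_{A}(a)\big(\func{Tr}(\phi)\big)=\func{Tr}\big([\nabla_{a},\phi]\big),\qquad a\in\Gamma(A),\ \phi\in\Gamma(\func{End}E),
\]
which I regard as the heart of the proof. Here one first records that $[\nabla_{a},\phi]=\overline{\nabla}_{a}(\phi)$ genuinely lies in $\Gamma(\func{End}E)$: since $\nabla_{a}\in\mathscr{CDO}(E)$ has symbol $\widetilde{\nabla_{a}}=\rho_{A}\circ a=\limfunc{Sec}\rho_{A}(a)$, a short computation with the Leibniz rule gives $[\nabla_{a},\phi](f\zeta)=f\,[\nabla_{a},\phi](\zeta)$.

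To prove the displayed identity I would proceed as follows. Write $X=\limfunc{Sec}\rho_{A}(a)$ and consider the defect $\Delta(\phi):=\func{Tr}([\nabla_{a},\phi])-X(\func{Tr}\phi)$. From $[\nabla_{a},f\phi]=f\,[\nabla_{a},\phi]+X(f)\,\phi$ and from $X(\func{Tr}(f\phi))=X(f\func{Tr}\phi)=fX(\func{Tr}\phi)+X(f)\func{Tr}\phi$, the two terms involving $X(f)$ cancel, so $\Delta(f\phi)=f\,\Delta(\phi)$; thus $\Delta$ is $\mathscr{C}^{\infty}(M)$-linear in $\phi$ and hence pointwise, and the identity may be checked over a trivializing open set $U\subseteq M$. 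On $U$, since $\nabla_{a}$ is a first-order differential operator, $\nabla_{a}|_{U}$ differs from $\nabla^{E}_{\rho_{A}(a)}$ — for any ordinary linear connection $\nabla^{E}$ on $E|_{U}$ — by some $\psi\in\Gamma(\func{End}E|_{U})$. Then $[\nabla_{a},\phi]|_{U}=[\nabla^{E}_{\rho_{A}(a)},\phi]+[\psi,\phi]$, the trace of the endomorphism commutator $[\psi,\phi]$ vanishes, and $\func{Tr}\big([\nabla^{E}_{\rho_{A}(a)},\phi]\big)=\rho_{A}(a)(\func{Tr}\phi)$ is the classical fact for linear connections (equivalently, a one-line local-frame computation in which the Christoffel terms cancel under the trace). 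This yields $\Delta|_{U}=0$, hence $\Delta=0$, which combined with the first paragraph completes the proof.

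The only substantial obstacle is the displayed termwise identity — that the trace annihilates the ``connection part'' even though $\nabla_{a}$ is merely $\mathbb{R}$-linear and not $\mathscr{C}^{\infty}(M)$-linear; everything else is routine bookkeeping with the Koszul formula. The point requiring care is precisely that $\nabla_{a}$ need not be the restriction of a global connection and need not be $\mathscr{C}^{\infty}(M)$-linear, yet it \emph{is} a local operator (a covariant differential operator on $E$), and this locality is what legitimizes both the reduction of $\Delta$ to a pointwise object and the local comparison of $\nabla_{a}$ with an ordinary linear connection.
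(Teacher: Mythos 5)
Your argument is correct, and at its core it performs the same reduction as the paper: both proofs come down to the degree-zero identity $\limfunc{Sec}\rho _{A}\left( a\right) \left( \func{Tr}\phi \right) =\func{Tr}\left( \left[ \nabla _{a},\phi \right] \right) $. The routes to that reduction differ. The paper works through the isomorphism $\mathcal{A}lt_{\mathbb{R}}^{\bullet }\left( \Gamma \left( A\right) ;\Gamma \left( \func{End}E\right) \right) \cong \mathcal{A}lt_{\mathbb{R}}^{\bullet }\left( \Gamma \left( A\right) ;\mathscr{C}^{\infty }\left( M\right) \right) \otimes _{\mathscr{C}^{\infty }\left( M\right) }\Gamma \left( \func{End}E\right) $, writes $\func{Tr}_{\ast }\left( \eta \otimes \varphi \right) =\eta \cdot \func{Tr}\varphi $, and invokes the antiderivation property of both differentials on elementary tensors; you instead evaluate the Koszul formula termwise and observe that the bracket terms match verbatim, which is arguably cleaner since it avoids having to decompose a general $\mathbb{R}$-linear form into elementary tensors. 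More importantly, the paper dismisses the degree-zero identity as ``a simple matter to see,'' whereas you actually prove it, and your proof isolates exactly the right point: the defect $\Delta \left( \phi \right) =\func{Tr}\left( \left[ \nabla _{a},\phi \right] \right) -\limfunc{Sec}\rho _{A}\left( a\right) \left( \func{Tr}\phi \right) $ is $\mathscr{C}^{\infty }\left( M\right) $-linear in $\phi $ because the two $X\left( f\right) $-terms cancel, hence it is a local object, so over a trivializing set one may compare $\nabla _{a}$ with a genuine linear connection having the same symbol; the difference is an endomorphism, whose contribution dies under the trace of a commutator, and the remaining term is the classical identity for linear connections. This correctly exploits that $\nabla _{a}\in \mathscr{CDO}\left( E\right) $ is a local first-order operator even though $a\mapsto \nabla _{a}$ is only $\mathbb{R}$-linear --- precisely the subtlety that makes the lemma nontrivial in this setting. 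I see no gaps.
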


\begin{proof}
First, recall that the space $\mathcal{A}lt_{\mathbb{R}}^{\bullet }\left(
\Gamma \left( A\right) ;\Gamma \left( \func{End}E\right) \right) $ is
isomorphic to%
\begin{equation*}
\mathcal{A}lt_{\mathbb{R}}^{\bullet }\left( \Gamma \left( A\right) ;%
\mathscr{C}%
^{\infty }\left( M\right) \right) \otimes _{%
\mathscr{C}%
^{\infty }\left( M\right) }\Gamma \left( \func{End}E\right) .
\end{equation*}%
Let $\eta \in \mathcal{A}lt_{\mathbb{R}}^{n}\left( \Gamma \left( A\right) ;%
\mathscr{C}%
^{\infty }\left( M\right) \right) $, $\varphi \in \Gamma \left( \func{End}%
E\right) $. Then $\func{Tr}_{\ast }\left( \eta \otimes \varphi \right) =\eta
\cdot \func{Tr}\varphi $. It is a simple matter to see that $d_{A,\mathbb{R}%
}\left( \func{Tr}\varphi \right) =\func{Tr}_{\ast }\left( d_{\mathbb{R}}^{%
\overline{\nabla }}\varphi \right) $. Therefore%
\begin{eqnarray*}
d_{A,\mathbb{R}}\func{Tr}_{\ast }\left( \eta \otimes \varphi \right) &=&d_{A,%
\mathbb{R}}\eta \cdot \func{Tr}\varphi +\left( -1\right) ^{n}\eta \wedge
d_{A,\mathbb{R}}\left( \func{Tr}\varphi \right) \\
&=&d_{A,\mathbb{R}}\eta \cdot \func{Tr}\varphi +\left( -1\right) ^{n}\eta
\wedge \func{Tr}_{\ast }\left( d_{\mathbb{R}}^{\overline{\nabla }}\varphi
\right) \\
&=&\func{Tr}_{\ast }\left( d_{A,\mathbb{R}}\eta \otimes \varphi +\left(
-1\right) ^{n}\eta \wedge d_{\mathbb{R}}^{\overline{\nabla }}\varphi \right)
\\
&=&\func{Tr}_{\ast }\left( d_{\mathbb{R}}^{\overline{\nabla }}\left( \eta
\otimes \varphi \right) \right) .
\end{eqnarray*}
\end{proof}

$%
\mathscr{C}%
^{\infty }\left( \mathbb{R}\times M\right) $-modules $\Gamma \left( \func{pr}%
_{2}^{\ast }A\right) $ and $%
\mathscr{C}%
^{\infty }\left( \mathbb{R}^{k}\times M\right) \otimes _{%
\mathscr{C}%
^{\infty }\left( M\right) }\Gamma \left( A\right) $ are isomorphic (see \cite%
{Higgins-Mackenzie}) and this way the module of cross-sections of the
inverse image%
\begin{equation*}
\func{pr}_{2}^{\;\wedge }\hspace{-0.1cm}\left( A\right) =\left\{ \left(
\gamma ,w\right) \in T\left( \mathbb{R}^{k}\times M\right) \times A:\left( 
\func{pr}_{2}\right) _{\ast }\gamma =\rho _{A}\left( w\right) \right\} \cong
T\mathbb{R}^{k}\times A
\end{equation*}%
of $A$ by $\func{pr}_{2}$ is a $%
\mathscr{C}%
^{\infty }\left( \mathbb{R}^{k}\times M\right) $-submodule of 
\begin{equation*}
\mathscr{X}%
\left( \mathbb{R}^{k}\times M\right) \times \left( 
\mathscr{C}%
^{\infty }\left( \mathbb{R}^{k}\times M\right) \otimes _{%
\mathscr{C}%
^{\infty }\left( M\right) }\Gamma \left( A\right) \right) 
\end{equation*}%
($T\mathbb{R}^{k}\times A$ is the Cartesian product of Lie algebroids $T%
\mathbb{R}^{k}$ and $A$, see \cite{Kubarski-invariant}). We denote
cross-sections $0\times a$, $\frac{\partial }{\partial t^{j}}\times 0$ of
the vector bundle $T\mathbb{R}^{k}\times A$ briefly by $a$ and $\frac{%
\partial }{\partial t^{j}}$, respectively. Let 
\begin{equation*}
\Delta ^{k}=\left\{ \left( t_{1},...,t_{k}\right) \in \mathbb{R}%
^{k};\;\;\;\forall i\;\;t_{i}\geq 0\,,\;\;\sum\nolimits_{i=1}^{k}t_{i}\leq
1\right\} 
\end{equation*}%
be the \emph{standard }$k$\emph{-simplex} in $\mathbb{R}^{k}$. Additionally
we set the \emph{standard }$0$\emph{-simplex} as $\Delta ^{0}=\left\{
0\right\} $. Define%
\begin{equation*}
\dint\nolimits_{\Delta ^{k}}:\mathcal{A}lt_{\mathbb{R}}^{\bullet }\left(
\Gamma \left( T\mathbb{R}^{k}\times A\right) ;%
\mathscr{C}%
^{\infty }\left( \mathbb{R}^{k}\times M\right) \right) \longrightarrow 
\mathcal{A}lt_{\mathbb{R}}^{\bullet -k}\left( \Gamma \left( A\right) ;%
\mathscr{C}%
^{\infty }\left( M\right) \right) ,
\end{equation*}%
\begin{equation*}
\left( \int\nolimits_{\Delta ^{k}}\omega \right) \left(
a_{1},...,a_{n-k}\right) =\int\nolimits_{\Delta ^{k}}\omega \left( \frac{%
\partial }{\partial t^{1}},...,\frac{\partial }{\partial t^{k}}%
,a_{1},...,a_{n-k}\right) _{|\left( t_{1},...,t_{k},\bullet \right)
}dt_{1}...dt_{k},
\end{equation*}%
\begin{equation*}
\left( \int\nolimits_{\Delta ^{0}}\omega \right) \left(
a_{1},...,a_{n}\right) =\iota _{0}^{\ast }\left( \omega \left( 0\times
a_{1},...,0\times a_{n}\right) \right) ,\ \ \ \int\nolimits_{\Delta
^{0}}f=\iota _{0}^{\ast }f
\end{equation*}%
for all $n\geq 1$, $1\leq k\leq n$, $\omega \in \mathcal{A}lt_{\mathbb{R}%
}^{n}\left( \Gamma \left( T\mathbb{R}^{k}\times A\right) ;%
\mathscr{C}%
^{\infty }\left( M\right) \right) $, $f\in 
\mathscr{C}%
^{\infty }\left( \mathbb{R}^{k}\times M\right) $ and where $\iota
_{0}:M\rightarrow \Delta ^{0}\times M$ is an inclusion defined by $\iota
_{0}\left( x\right) =\left( 0,x\right) $.

In view of the factorization property in $%
\mathscr{C}%
^{\infty }\left( \mathbb{R}^{k}\times M\right) \otimes _{%
\mathscr{C}%
^{\infty }\left( M\right) }\Gamma \left( A\right) $, we conclude that for $%
\nabla $ there exists exactly one $\mathbb{R}$-linear connection 
\begin{equation*}
\widetilde{\nabla }:\Gamma \left( T\mathbb{R}^{k}\times A\right)
\longrightarrow 
\mathscr{CDO}%
\left( \func{pr}_{2}^{\;\ast }E\right) 
\end{equation*}%
of $T\mathbb{R}^{k}\times A$ on $\func{pr}_{2}^{\;\ast }E$\emph{\ }such that%
\begin{equation*}
\left( \widetilde{\nabla }_{\left( X,\tsum_{i}r^{i}\otimes a^{i}\right)
}\left( \nu \circ \func{pr}_{2}\right) \right) \left( t,\bullet \right)
=\nabla _{\tsum_{i}r^{i}\left( t,\bullet \right) \cdot a^{i}}\left( \nu
\right) 
\end{equation*}%
for all\ $\left( X,\tsum_{i}r^{i}\otimes a^{i}\right) \in 
\mathscr{X}%
\left( \mathbb{R}^{k}\times M\right) \times \left( 
\mathscr{C}%
^{\infty }\left( \mathbb{R}^{k}\times M\right) \otimes _{%
\mathscr{C}%
^{\infty }\left( M\right) }\Gamma \left( A\right) \right) $,\ $\nu \in
\Gamma \left( E\right) $, $t=\left( t_{1},...,t_{k}\right) \in \mathbb{R}^{k}
$.\ In particular, $\left( \widetilde{\nabla }_{\left( 0\times \left( \rho
_{A}\circ a\right) ,1\otimes a\right) }\left( \nu \circ \func{pr}_{2}\right)
\right) \left( t,\bullet \right) =\nabla _{a}\left( \nu \right) $, $a\in
\Gamma \left( A\right) $.$\;$The connection$\;\widetilde{\nabla }$\ is
called the \emph{lifting}\ of $\nabla $ to $T\mathbb{R}^{k}\times A$.

Let\ $\nabla ^{0},\,\nabla ^{1},\ldots ,\nabla ^{k}:\Gamma \left( A\right)
\rightarrow 
\mathscr{CDO}%
\left( E\right) \;$be $\mathbb{R}$-linear connections of a Lie algebroid $A$
on a vector bundle $E$\ and\ $\widetilde{\nabla }^{0},\,\widetilde{\nabla }%
^{1},\ldots ,\,\widetilde{\nabla }^{k}:\Gamma \left( T\mathbb{R}^{k}\times
A\right) \rightarrow 
\mathscr{CDO}%
\left( \func{pr}_{2}^{\;\ast }E\right) $\ be their liftings to $T\mathbb{R}%
^{k}\times A$.\ Then there exists an $\mathbb{R}$-linear connection%
\begin{equation*}
\nabla ^{\func{aff}_{k}}:\Gamma \left( T\mathbb{R}^{k}\times A\right)
\longrightarrow 
\mathscr{CDO}%
\left( \func{pr}_{2}^{\;\ast }E\right) ,
\end{equation*}%
called the \emph{affine combination of connections} $\nabla ^{0},\nabla
^{1},\ldots ,\nabla ^{k}$, given by 
\begin{eqnarray*}
&&\left( \nabla _{\,\,\,\left( X,\tsum_{i}r^{i}\otimes a^{i}\right) }^{\func{%
aff}_{k}}\left( \nu \circ \func{pr}_{2}\right) \right) \left( t,\bullet
\right)  \\
&=&\left( 1-\sum\nolimits_{i=1}^{k}t_{i}\right) \cdot \left( \nabla
^{0}\right) _{\tsum_{i}r^{i}\left( t,\bullet \right) \cdot a^{i}}\left( \nu
\right) +\sum\nolimits_{i=1}^{k}t_{i}\cdot \left( \nabla ^{i}\right)
_{\tsum_{i}r^{i}\left( t,\bullet \right) \cdot a^{i}}\left( \nu \right) .
\end{eqnarray*}%
For all $0<k\leq 2p$ we define an $\mathbb{R}$-linear form%
\begin{equation*}
\limfunc{cs}\nolimits_{p}\left( \nabla ^{0},...,\nabla ^{k}\right)
=\int\nolimits_{\Delta ^{k}}\func{ch}_{p}\left( \nabla ^{\func{aff}%
_{k}}\right) \in \mathcal{A}lt_{\mathbb{R}}^{2p-k}\left( \Gamma \left(
A\right) ;%
\mathscr{C}%
^{\infty }\left( M\right) \right) 
\end{equation*}%
called the \emph{Chern-Simons form} for $\left( \nabla ^{0},...,\nabla
^{k}\right) $ and additionally we put $cs_{p}\left( \nabla ^{0}\right) =%
\func{ch}_{p}\left( \nabla ^{0}\right) $.

We have the following (useful) Stokes' formula for $\mathbb{R}$-linear forms
on $A$ (see \cite{Balcerzak-Stokes}) being a generalization of the one for
tangent bundles given by R.~Bott \cite{Bott}. For every natural number\emph{%
\ }$k$,%
\begin{equation}
\int\nolimits_{\Delta ^{k}}\circ \,d_{T\mathbb{R}^{k}\times A,\mathbb{R}%
}+\left( -1\right) ^{k+1}d_{A,\mathbb{R}}\circ \int\nolimits_{\Delta
^{k}}=\dsum\nolimits_{j=0}^{k}\left( -1\right) ^{j}\int\nolimits_{\Delta
^{k-1}}\circ \,\left( d\sigma _{j}^{k-1}\times \func{id}_{A}\right) ^{\ast },
\label{Stokes}
\end{equation}%
where $\sigma _{j}^{k}:\mathbb{R}^{k}\rightarrow \mathbb{R}^{k+1}$\ for $%
0\leq j\leq k+1$\ are functions defined by $\sigma _{0}^{0}\left( 0\right)
=1 $, $\sigma _{1}^{0}\left( 0\right) =0$, and for $t=\left(
t_{1},...,t_{k}\right) \in \mathbb{R}^{k}$ by%
\begin{eqnarray*}
\sigma _{0}^{k}\left( t\right) &=&\left(
1-\dsum\nolimits_{i=1}^{k}t_{i},t_{1},...,t_{k}\right) , \\
\sigma _{j}^{k}\left( t\right) &=&\left(
t_{1},...,t_{j-1},0,t_{j},...,t_{k}\right) ,\;\;1\leq j\leq k+1,
\end{eqnarray*}%
and where $\left( \left( \int\nolimits_{\Delta ^{k-1}}\circ \,\left( d\sigma
_{j}^{k-1}\times \func{id}_{A}\right) ^{\ast }\right) \omega \right) \left(
a_{1},...,a_{n-k+1}\right) $ is, by definition, equal to%
\begin{equation*}
\int\nolimits_{\Delta ^{k-1}}\omega \left( d\sigma _{j}^{k-1}\left( \frac{%
\partial }{\partial t^{1}}\right) ,...,d\sigma _{j}^{k-1}\left( \frac{%
\partial }{\partial t^{k-1}}\right) ,a_{1},...,a_{n-k+1}\right) _{|\left(
t_{1},...,t_{k-1},\bullet \right) }\hspace{-0.3cm}dt_{1}...dt_{k-1}
\end{equation*}%
and 
\begin{equation*}
\left( \left( \int\nolimits_{\Delta ^{0}}\circ \,\left( d\sigma
_{j}^{0}\times \func{id}_{A}\right) ^{\ast }\right) \omega \right) \left(
a_{1},...,a_{n}\right) =\left( \sigma _{j}^{0}\times \func{id}_{M}\circ
\iota _{0}\right) ^{\ast }\left( \omega \left( a_{1},...,a_{n}\right) \right)
\end{equation*}%
if $k\geq 2$, $\omega \in \mathcal{A}lt_{\mathbb{R}}^{n}\left( \Gamma \left(
T\mathbb{R}^{k}\times A\right) ;%
\mathscr{C}%
^{\infty }\left( \mathbb{R}^{k}\times M\right) \right) $, $a_{i}\in \Gamma
\left( A\right) $, $j\in \left\{ 0,1\right\} $.

The following lemma will be useful below in the proof of the Chern-Simons
formula for $\mathbb{R}$-linear connections of Lie algebroids.

\begin{lemma}
\label{lemma_abc}Let $a,\,b\in \Gamma \left( A\right) $,\ $\nu \in \Gamma
\left( E\right) $,\ $t\in \mathbb{R}^{k-1}$, $0\leq j\leq k,$ $1\leq s\leq
k, $ $1\leq z\leq k-1$. Denote here the affine combination $\nabla ^{\func{%
aff}_{k}}$ of$\ \nabla ^{0},\ldots ,\nabla ^{k}$ by $\nabla ^{0,...,k}$. Then

\begin{itemize}
\item[\emph{(a)}] $\left( R_{a,b}^{\nabla ^{0,...,k}}\left( \nu \circ \func{%
pr}_{2}\right) \right) \left( \sigma _{j}^{k-1}\left( t\right) ,\bullet
\right) =\left( R_{a,b}^{\nabla ^{0,...\widehat{j}...,k}}\left( \nu \circ 
\func{pr}_{2}\right) \right) \left( t,\bullet \right) ,$

\item[\emph{(b)}] $\left( R_{\frac{\partial }{\partial \,\tilde{t}\,^{s}}%
,a}^{\nabla ^{0,...,k}}\left( \nu \circ \func{pr}_{2}\right) \right) \left(
\sigma _{j}^{k-1}\left( t\right) ,\bullet \right) $ is equal to $\left( R_{%
\frac{\partial }{\partial \,t^{s}},a}^{\nabla ^{0,...\widehat{j}%
...,k}}\left( \nu \circ \func{pr}_{2}\right) \right) \left( t,\bullet
\right) $ if $1\leq s<j,$ and $\left( R_{\frac{\partial }{\partial \,t^{s-1}}%
,a}^{\nabla ^{0,...\widehat{j}...,k}}\left( \nu \circ \func{pr}_{2}\right)
\right) \left( t,\bullet \right) $ if $j\leq s\leq k$, and where $\tilde{t}%
\,^{i}$ are coordinates of the identity map of $\mathbb{R}^{k}$,

\item[\emph{(c)}] $\left( R_{d\sigma _{j}^{k-1}\left( \frac{\partial }{%
\partial t^{z}}\right) ,a}^{\nabla ^{0,...,k}}\left( \nu \circ \func{pr}%
_{2}\right) \right) \left( \sigma _{j}^{k-1}\left( t\right) ,\bullet \right)
=\left( R_{\frac{\partial }{\partial t^{z}},a}^{\nabla ^{1...,k}}\left( \nu
\circ \func{pr}_{2}\right) \right) \left( t,\bullet \right) $.
\end{itemize}
\end{lemma}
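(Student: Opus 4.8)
The plan is to reduce all three identities to a single explicit computation of the curvature of an affine combination of liftings, and then to track how the barycentric coordinate functions and their derivatives behave under the face maps $\sigma_{j}^{k-1}$.

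First I would rewrite the affine combination in the form $\nabla^{0,\ldots,k}_{u}=\sum_{i=0}^{k}\lambda_{i}\,\widetilde{\nabla}^{i}_{u}$ for every $u\in\Gamma(T\mathbb{R}^{k}\times A)$, where $\lambda_{0}=1-\sum_{l=1}^{k}\tilde{t}^{\,l}$ and $\lambda_{i}=\tilde{t}^{\,i}$ for $1\le i\le k$ are the barycentric coordinate functions on $\mathbb{R}^{k}$, pulled back to $\mathbb{R}^{k}\times M$; this is just the defining formula of $\nabla^{\func{aff}_{k}}$ read through the defining formula of the liftings, and it makes sense as an identity of covariant differential operators because $\sum_{i}\lambda_{i}=1$. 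I would also record two facts: the liftings act trivially on pulled-back sections in the fibre directions, $\widetilde{\nabla}^{i}_{\partial/\partial\tilde{t}^{\,s}}(\nu\circ\func{pr}_{2})=0$; and, for the lifted sections, $[\![a,b]\!]=[\![a,b]\!]_{A}$, $[\![\partial/\partial\tilde{t}^{\,s},a]\!]=0$, $[\![\partial/\partial\tilde{t}^{\,s},\partial/\partial\tilde{t}^{\,r}]\!]=0$ in $T\mathbb{R}^{k}\times A$. Hence, when expanding $R^{\nabla}_{u,v}=[\![\nabla_{u},\nabla_{v}]\!]-\nabla_{[\![u,v]\!]}$ for $\nabla=\nabla^{0,\ldots,k}$, the fibre directions enter only through the Leibniz term coming from the anchor of $T\mathbb{R}^{k}\times A$: the anchor images of $a\in\Gamma(A)$ annihilate the $\lambda_{i}$, while the anchor image of $\partial/\partial\tilde{t}^{\,s}$ sends $\lambda_{i}$ to $\partial\lambda_{i}/\partial\tilde{t}^{\,s}=-\delta_{i0}+\delta_{is}$. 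A direct expansion then gives, for $a,b\in\Gamma(A)$ and $\nu\in\Gamma(E)$,
\begin{equation*}
\bigl(R_{a,b}^{\nabla^{0,\ldots,k}}(\nu\circ\func{pr}_{2})\bigr)(t,\bullet)=\sum_{i,m}\lambda_{i}(t)\lambda_{m}(t)\bigl((\nabla^{m})_{a}(\nabla^{i})_{b}-(\nabla^{m})_{b}(\nabla^{i})_{a}-(\nabla^{m})_{[\![a,b]\!]_{A}}\bigr)(\nu),
\end{equation*}
which is exactly $R_{a,b}^{\nabla^{t}}\nu$ for the ordinary $A$-connection $\nabla^{t}=\sum_{i}\lambda_{i}(t)\nabla^{i}$ on $E$, and
\begin{equation*}
\bigl(R_{\partial/\partial\tilde{t}^{\,s},a}^{\nabla^{0,\ldots,k}}(\nu\circ\func{pr}_{2})\bigr)(t,\bullet)=\bigl((\nabla^{s})_{a}-(\nabla^{0})_{a}\bigr)(\nu),
\end{equation*}
the second being independent of $t$.

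Next I would evaluate at $t=\sigma_{j}^{k-1}(\tau)$. A short check with the explicit $\sigma_{j}^{k-1}$ shows $\lambda_{j}\circ\sigma_{j}^{k-1}\equiv0$, while the remaining functions $\lambda_{i}\circ\sigma_{j}^{k-1}$, listed in their natural order, are precisely the barycentric coordinate functions of $\Delta^{k-1}$ belonging to the sublist $\nabla^{0},\ldots,\widehat{\nabla^{j}},\ldots,\nabla^{k}$ defining $\nabla^{0,\ldots\widehat{j}\ldots,k}$. Hence the fibre connection $\nabla^{\sigma_{j}^{k-1}(\tau)}$ equals the fibre connection of $\nabla^{0,\ldots\widehat{j}\ldots,k}$ at $\tau$, and (a) follows by reading the first displayed formula in dimension $k$ at $\sigma_{j}^{k-1}(\tau)$ against the same formula in dimension $k-1$ at $\tau$. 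For (b), the second displayed formula is constant in $t$, so its value at $\sigma_{j}^{k-1}(\tau)$ is still $(\nabla^{s})_{a}\nu-(\nabla^{0})_{a}\nu$; writing the analogous constant for $\nabla^{0,\ldots\widehat{j}\ldots,k}$ and matching it against this one is the coordinate bookkeeping encoded by $\tilde{t}^{\,s}\circ\sigma_{j}^{k-1}=t^{\,s}$ for $s<j$ and $=t^{\,s-1}$ for $s>j$. Finally (c) follows from (b) and linearity of $R^{\nabla^{0,\ldots,k}}$ in the first argument, using $d\sigma_{j}^{k-1}(\partial/\partial t^{\,z})=\partial/\partial\tilde{t}^{\,z}$ for $z<j$, $=\partial/\partial\tilde{t}^{\,z+1}$ for $z\ge j\ge1$, and $d\sigma_{0}^{k-1}(\partial/\partial t^{\,z})=-\partial/\partial\tilde{t}^{\,1}+\partial/\partial\tilde{t}^{\,z+1}$: substituting these into the constant from (b) and simplifying reproduces the mixed curvature of $\nabla^{0,\ldots\widehat{j}\ldots,k}$, which for $j=0$ is $\nabla^{1,\ldots,k}$.

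The two curvature expansions and the arithmetic of the substitution are routine. The one place that requires genuine care is the bookkeeping of the re-indexing $\{0,\ldots,k\}\setminus\{j\}\to\{0,\ldots,k-1\}$ together with the matched shift of coordinate indices, and in particular keeping the $0$-th face (where $d\sigma_{0}^{k-1}$ is not a coordinate vector field and the affine combination degenerates to $\nabla^{1,\ldots,k}$) separate from the faces with $j\ge1$.
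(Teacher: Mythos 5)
Your proof is correct, and since the paper's own proof of this lemma is the single phrase ``Just calculations,'' your write-up supplies exactly the computation being omitted: the identity $\nabla^{0,\ldots,k}_{u}=\sum_{i}\lambda_{i}\widetilde{\nabla}^{i}_{u}$ (legitimate because $\sum_{i}\lambda_{i}=1$), the two closed-form curvature expressions $R^{\nabla^{0,\ldots,k}}_{a,b}(\nu\circ\mathrm{pr}_{2})|_{(t,\bullet)}=R^{\nabla^{t}}_{a,b}\nu$ with $\nabla^{t}=\sum_{i}\lambda_{i}(t)\nabla^{i}$ and $R^{\nabla^{0,\ldots,k}}_{\partial/\partial\tilde{t}^{\,s},a}(\nu\circ\mathrm{pr}_{2})=(\nabla^{s}_{a}-\nabla^{0}_{a})\nu$, and the behaviour of the barycentric coordinates under the face maps are all there is to check, and I verified each of them. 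One thing your computation makes visible and is worth recording: the lemma as printed does not hold literally in every case it claims --- in (b) the clause $j\leq s\leq k$ fails at $s=j$ (for $j\geq 1$ it would give $\nabla^{j-1}-\nabla^{0}$ instead of $\nabla^{j}-\nabla^{0}$, and for $j=0$ the index $s-1$ can vanish), and in (c) the right-hand side $R^{\nabla^{1,\ldots,k}}$ is correct only for $j=0$, the general-$j$ answer being the mixed curvature of $\nabla^{0,\ldots\widehat{j}\ldots,k}$. Your version --- excluding $s=j$ from (b) and treating the $j=0$ face separately in (c) --- is precisely the statement that is both true and actually used in the proof of the Chern--Simons formula, since for $j\geq 1$ the differential $d\sigma_{j}^{k-1}$ never produces the coordinate direction $\partial/\partial\tilde{t}^{\,j}$, and only $d\sigma_{0}^{k-1}$ fails to send coordinate fields to coordinate fields.
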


\begin{proof}
Just calculations.
\end{proof}

\begin{theorem}
\emph{(The Chern-Simons formula for Lie algebroids and }$\mathbb{R}$\emph{%
-linear connections)} Let $\left( A,\rho _{A},[\![\cdot ,\cdot ]\!]\right) $
be a Lie algebroid on a manifold $M$, $E$ a vector bundle over $M$, $k\in 
\mathbb{N}$, $\nabla ^{0},\,...,\,\nabla ^{k}:\Gamma \left( A\right)
\rightarrow 
\mathscr{CDO}%
\left( E\right) \;$ $\mathbb{R}$-linear connections of $A$ on $E$. Then%
\begin{equation}
\left( -1\right) ^{k+1}\,d_{A,\mathbb{R}}\left( \limfunc{cs}%
\nolimits_{p}\left( \nabla ^{0},...,\nabla ^{k}\right) \right)
=\sum\nolimits_{j=0}^{k}\left( -1\right) ^{j}\,\limfunc{cs}%
\nolimits_{p}\left( \nabla ^{0},...\widehat{\nabla ^{j}}...,\nabla
^{k}\right)  \label{ChernSimonsformula}
\end{equation}%
for all integer numbers $p$ such that $0<k\leq 2p$ and $d_{A,\mathbb{R}%
}\left( cs_{p}\left( \nabla ^{0}\right) \right) =0$.
\end{theorem}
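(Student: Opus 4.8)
The plan is to derive the Chern--Simons formula \eqref{ChernSimonsformula} directly from Stokes' formula \eqref{Stokes} applied to the form $\omega=\func{ch}_{p}\bigl(\nabla^{\func{aff}_{k}}\bigr)\in\mathcal{A}lt_{\mathbb{R}}^{2p}\bigl(\Gamma(T\mathbb{R}^{k}\times A);\mathscr{C}^{\infty}(\mathbb{R}^{k}\times M)\bigr)$. Feeding this $\omega$ into \eqref{Stokes} and using the definition $\limfunc{cs}_{p}(\nabla^{0},\dots,\nabla^{k})=\int_{\Delta^{k}}\func{ch}_{p}(\nabla^{\func{aff}_{k}})$ immediately produces, on the left-hand side, the two terms $\int_{\Delta^{k}}d_{T\mathbb{R}^{k}\times A,\mathbb{R}}\,\func{ch}_{p}(\nabla^{\func{aff}_{k}})+(-1)^{k+1}d_{A,\mathbb{R}}\bigl(\limfunc{cs}_{p}(\nabla^{0},\dots,\nabla^{k})\bigr)$, and on the right-hand side a sum $\sum_{j=0}^{k}(-1)^{j}\int_{\Delta^{k-1}}(d\sigma_{j}^{k-1}\times\func{id}_{A})^{\ast}\func{ch}_{p}(\nabla^{\func{aff}_{k}})$. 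So there are two things to check: that the first left-hand term vanishes, and that the $j$-th face term equals $(-1)^{j}\limfunc{cs}_{p}(\nabla^{0},\dots,\widehat{\nabla^{j}},\dots,\nabla^{k})$.

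For the vanishing of $\int_{\Delta^{k}}d_{T\mathbb{R}^{k}\times A,\mathbb{R}}\func{ch}_{p}(\nabla^{\func{aff}_{k}})$, the key is the Bianchi-type identity together with Lemma \ref{comm_Tr_and_diff}. Applying Lemma \ref{comm_Tr_and_diff} to the lifted setting gives $d_{T\mathbb{R}^{k}\times A,\mathbb{R}}\func{ch}_{p}(\nabla^{\func{aff}_{k}})=d_{T\mathbb{R}^{k}\times A,\mathbb{R}}\func{Tr}_{\ast}\bigl((R^{\nabla^{\func{aff}_{k}}})^{p}\bigr)=\func{Tr}_{\ast}\bigl(d_{\mathbb{R}}^{\overline{\nabla^{\func{aff}_{k}}}}(R^{\nabla^{\func{aff}_{k}}})^{p}\bigr)$, and since $d_{\mathbb{R}}^{\overline{\nabla}}$ is an antiderivation while the Bianchi identity gives $d_{\mathbb{R}}^{\overline{\nabla}}R^{\nabla}=0$, the inner expression $d_{\mathbb{R}}^{\overline{\nabla^{\func{aff}_{k}}}}(R^{\nabla^{\func{aff}_{k}}})^{p}$ vanishes. (One should record the $\mathbb{R}$-linear Bianchi identity $d_{\mathbb{R}}^{\overline{\nabla}}R^{\nabla}=0$ as a short lemma or at least verify it from the Jacobi identity in $B$; it is the identity $[\![\nabla_{a},R^{\nabla}_{b,c}]\!]$ summed cyclically matches $R^{\nabla}_{[\![a,b]\!],c}$ summed cyclically, which is exactly the Jacobi identity for $[\![\nabla_{a},\nabla_{b},\nabla_{c}]\!]$ rewritten.) Hence the whole $\int_{\Delta^{k}}$-term is zero.

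For the face terms, fix $j$ and compute $(d\sigma_{j}^{k-1}\times\func{id}_{A})^{\ast}\func{ch}_{p}(\nabla^{\func{aff}_{k}})$ evaluated on $\tfrac{\partial}{\partial t^{1}},\dots,\tfrac{\partial}{\partial t^{k-1}},a_{1},\dots$. Since $\func{ch}_{p}(\nabla)=\func{Tr}_{\ast}(R^{\nabla})^{p}$ is built entirely out of compositions of curvature entries $R^{\nabla^{\func{aff}_{k}}}_{\cdot,\cdot}$, and the pullback by $d\sigma_{j}^{k-1}\times\func{id}_{A}$ together with restriction to $\sigma_{j}^{k-1}(t)$ acts entry-by-entry, parts (a), (b), (c) of Lemma \ref{lemma_abc} let me replace every curvature entry of $\nabla^{\func{aff}_{k}}$ by the corresponding curvature entry of the affine combination $\nabla^{0,\dots,\widehat{j},\dots,k}$ of the remaining $k$ connections (here I use that the $j$-th face map $\sigma_{j}^{k-1}$ sets the coordinate weighting $\nabla^{j}$ to zero, so the restricted affine combination is exactly $\nabla^{\func{aff}_{k-1}}$ of $\nabla^{0},\dots,\widehat{\nabla^{j}},\dots,\nabla^{k}$ — for $j=0$ one uses part (c) with the barycentric face and the identification of the leftover weight $1-\sum t_{i}$). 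Therefore $\int_{\Delta^{k-1}}(d\sigma_{j}^{k-1}\times\func{id}_{A})^{\ast}\func{ch}_{p}(\nabla^{\func{aff}_{k}})=\int_{\Delta^{k-1}}\func{ch}_{p}(\nabla^{\func{aff}_{k-1}}\text{ of }\nabla^{0},\dots,\widehat{\nabla^{j}},\dots,\nabla^{k})=\limfunc{cs}_{p}(\nabla^{0},\dots,\widehat{\nabla^{j}},\dots,\nabla^{k})$, which is precisely the $j$-th summand on the right of \eqref{ChernSimonsformula}. Combining the three observations yields \eqref{ChernSimonsformula}. Finally, the case $k=0$: $cs_{p}(\nabla^{0})=\func{ch}_{p}(\nabla^{0})=\func{Tr}_{\ast}(R^{\nabla^{0}})^{p}$, and $d_{A,\mathbb{R}}(cs_{p}(\nabla^{0}))=\func{Tr}_{\ast}\bigl(d_{\mathbb{R}}^{\overline{\nabla^{0}}}(R^{\nabla^{0}})^{p}\bigr)=0$ by the same Bianchi argument as above.

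I expect the main obstacle to be the careful bookkeeping of signs and face-index shifts in the second paragraph's computation: one must verify that the barycentric face $j=0$ (where $\sigma_{0}^{k-1}(t)=(1-\sum t_{i},t_{1},\dots,t_{k-1})$ and the image lands in the full simplex) and the coordinate faces $j\ge 1$ (where $\sigma_{j}^{k-1}$ inserts a zero) both produce genuinely the affine combination of the truncated list, with the $(-1)^{j}$ sign matching the sign already present in \eqref{Stokes}; parts (a)--(c) of Lemma \ref{lemma_abc} are tailored for exactly this, but assembling them into the identity $(d\sigma_{j}^{k-1}\times\func{id}_{A})^{\ast}\func{ch}_{p}(\nabla^{\func{aff}_{k}})=\func{ch}_{p}(\text{truncated }\nabla^{\func{aff}})$ requires tracking how the permutation sum defining $(R^{\nabla})^{p}$ interacts with the reindexing of the $\tfrac{\partial}{\partial t}$-slots. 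A secondary point to be careful about is the precise statement and proof of the $\mathbb{R}$-linear Bianchi identity, which is needed both for the interior term and for the $k=0$ case, since locality is not assumed — but this is a direct consequence of the Jacobi identity for the bracket $[\![\cdot,\cdot]\!]_{B}$ applied to $\nabla_{a},\nabla_{b},\nabla_{c}$ and requires no locality.
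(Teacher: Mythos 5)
Your proposal follows essentially the same route as the paper's own proof: apply Stokes' formula \eqref{Stokes} to $\func{ch}_{p}\left( \nabla ^{\func{aff}_{k}}\right) $, kill the interior term via Lemma \ref{comm_Tr_and_diff} together with the Bianchi identity, and identify the face terms with the truncated Chern--Simons forms via Lemma \ref{lemma_abc} (the paper additionally records the small observation $R_{\frac{\partial }{\partial \widetilde{t}^{i}},\frac{\partial }{\partial \widetilde{t}^{j}}}^{\nabla ^{\func{aff}_{k}}}=0$ to cut down the surviving terms, which is part of the bookkeeping you correctly flag). The argument is correct and matches the paper's.
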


\begin{proof}
From Lemma \ref{comm_Tr_and_diff} and the Bianchi identity ($d_{\mathbb{R}}^{%
\overline{\nabla ^{j}}}\left( R^{\nabla ^{j}}\right) =0$) we deduce that
forms $\func{ch}_{p}\left( \nabla ^{0}\right) $ and $\func{ch}_{p}\left(
\nabla ^{\func{aff}_{k}}\right) $ are closed. Since these forms are closed,
applying the Stokes formula (\ref{Stokes}) we conclude that 
\begin{equation*}
\left( -1\right) ^{k+1}\,d_{A,\mathbb{R}}\left( \limfunc{cs}%
\nolimits_{p}\left( \nabla ^{0},...,\nabla ^{k}\right) \right)
=\sum\limits_{j=0}^{k}\left( -1\right) ^{j}\int\nolimits_{\Delta
^{k-1}}\left( d\sigma _{j}^{k-1}\times \func{id}_{A}\right) ^{\ast }\func{ch}%
_{p}\left( \nabla ^{\func{aff}_{k}}\right) .
\end{equation*}%
Let $a_{0},$...,$a_{2p-k}\in \Gamma \left( A\right) $. From the above%
\begin{multline*}
\left( -1\right) ^{k+1}\,d_{A,\mathbb{R}}\left( \limfunc{cs}%
\nolimits_{p}\left( \nabla ^{0},...,\nabla ^{k}\right) \right) \left(
a_{0},...,a_{2p-k}\right) \\
=\dsum\nolimits_{j=0}^{k}\left( -1\right) ^{j}\left( \int\nolimits_{\Delta
^{k-1}}\left( d\sigma _{j}^{k-1}\times \func{id}_{M}\right) ^{\ast }\func{ch}%
_{p}\left( \nabla ^{\func{aff}_{k}}\right) \right) \left(
a_{0},...,a_{2p-k}\right) .
\end{multline*}%
From the definition of $\left( R^{\nabla ^{\func{aff}_{k}}}\right) ^{p}$ and
fact that $R_{\frac{\partial }{\partial \widetilde{t}^{i}},\frac{\partial }{%
\partial \widetilde{t}^{j}}}^{\nabla ^{\func{aff}_{k}}}=0$ (where $\left( 
\tilde{t}\,^{1},...,\tilde{t}\,^{k}\right) $ is the identity map on the
manifold $\mathbb{R}^{k}$) we observe that the possible non-zero terms in
the above sum are the form%
\begin{equation*}
R_{d\sigma _{j}^{k-1}\left( \frac{\partial }{\partial t^{s}}\right)
,a}^{\nabla ^{\func{aff}_{k}}}\circ \cdots \circ R_{b,c}^{\nabla ^{\func{aff}%
_{k}}}\circ \cdots \circ R_{d,e}^{\nabla ^{\func{aff}_{k}}},\ \ \text{\ }%
a,b,c,d,e\in \Gamma \left( A\right) .
\end{equation*}%
Lemma \ref{lemma_abc} now yields that $\left( -1\right) ^{k+1}\,d_{A,\mathbb{%
R}}\left( \limfunc{cs}\nolimits_{p}\left( \nabla ^{0},...,\nabla ^{k}\right)
\right) \left( a_{0},...,a_{2p-k}\right) $ is equal to%
\begin{gather*}
\sum_{j=0}^{k}\left( -1\right) ^{j}\hspace{-0.1cm}\int\nolimits_{\Delta
^{k-1}}\hspace{-0.3cm}\func{ch}_{p}\left( \nabla ^{0,...\widehat{j}%
...,k}\right) \hspace{-0.1cm}\left. \left( \frac{\partial }{\partial t^{1}}%
,...,\frac{\partial }{\partial t^{k-1}},a_{0\,},...,a_{2p-k}\right)
\right\vert _{\left( t_{1},...,t_{k-1},\bullet \right) }\hspace{-0.3cm}%
dt_{1}\ldots dt_{k-1} \\
=\left( \sum_{j=0}^{k}\left( -1\right) ^{j}\,\limfunc{cs}\nolimits_{p}\left(
\nabla _{0},...\widehat{\nabla ^{j}}...,\nabla _{k}\right) \right) \left(
a_{0\,},...,a_{2p-k}\right) .
\end{gather*}
\end{proof}

\begin{remark}
\emph{If }$\nabla ^{0},\nabla ^{1},\ldots ,\nabla ^{k}:\Gamma \left(
A\right) \rightarrow 
\mathscr{CDO}%
\left( E\right) \;$\emph{are }$%
\mathscr{C}%
^{\infty }\left( M\right) $\emph{-linear connections, then }$\nabla ^{\func{%
aff}_{k}}$\emph{\ is a }$%
\mathscr{C}%
^{\infty }\left( M\right) $\emph{-linear connection. In this case, we obtain
a formula due to property of Chern-Simons transgressions in \cite%
{Crainic-Fernandes-jets} by M.~Crainic and R.~L.~Fernandes.}
\end{remark}

\begin{corollary}
\emph{Let }$\nabla :\Gamma \left( A\right) \rightarrow 
\mathscr{CDO}%
\left( E\right) $ \emph{be an} $\mathbb{R}$\emph{-linear connection of }$A$%
\emph{\ on a vector bundle }$E$\emph{.} \emph{The Chern character forms} $%
\func{ch}_{p}\left( \nabla \right) \in \mathcal{A}lt_{\mathbb{R}}^{2p}\left(
\Gamma \left( A\right) ;%
\mathscr{C}%
^{\infty }\left( M\right) \right) $ \emph{are closed and their cohomology
classes}%
\begin{equation*}
\func{ch}_{p}\left( A,E\right) =\left[ \func{ch}_{p}\left( \nabla \right) %
\right] \in H_{\rho _{A},\mathbb{R}}^{2p}\left( A;M\times \mathbb{R}\right) 
\emph{,}
\end{equation*}%
\emph{do not depend on the choice of the connection }$\nabla $\emph{.
Indeed, let} $\nabla ^{0}$,\thinspace $\nabla ^{1}:\Gamma \left( A\right)
\rightarrow 
\mathscr{CDO}%
\left( E\right) $ \emph{be} $\mathbb{R}$\emph{-linear connections of }$A$%
\emph{\ on }$E$\emph{. According to (\ref{ChernSimonsformula}), we have}%
\begin{eqnarray*}
d_{A,\mathbb{R}}\left( \limfunc{cs}\nolimits_{p}\left( \nabla ^{0},\nabla
^{1}\right) \right)  &=&\limfunc{cs}\nolimits_{p}\left( \nabla ^{1}\right) -%
\limfunc{cs}\nolimits_{p}\left( \nabla ^{0}\right)  \\
&=&\func{ch}_{p}\left( \nabla ^{1}\right) -\func{ch}_{p}\left( \nabla
^{0}\right) .
\end{eqnarray*}
\end{corollary}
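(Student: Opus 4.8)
The plan is to derive the corollary directly from the Chern-Simons formula \eqref{ChernSimonsformula}, which is the theorem just proved; almost all the work has already been done, and what remains is to extract the right special case and interpret it cohomologically. First I would invoke the $k=1$ instance of the theorem: for two $\mathbb{R}$-linear connections $\nabla^{0},\nabla^{1}:\Gamma(A)\to\mathscr{CDO}(E)$ and any integer $p\geq 1$ (so that $0<1\leq 2p$), the formula reads
\[
(-1)^{2}\,d_{A,\mathbb{R}}\bigl(\operatorname{cs}\nolimits_{p}(\nabla^{0},\nabla^{1})\bigr)=\operatorname{cs}\nolimits_{p}(\nabla^{1})-\operatorname{cs}\nolimits_{p}(\nabla^{0}),
\]
since the right-hand side is the alternating sum over $j\in\{0,1\}$ of the $\operatorname{cs}_{p}$ of the connection list with $\nabla^{j}$ deleted, and by the convention $\operatorname{cs}_{p}(\nabla^{0})=\operatorname{ch}_{p}(\nabla^{0})$ recorded just before the Stokes formula. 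Rewriting with $\operatorname{cs}_{p}(\nabla^{i})=\operatorname{ch}_{p}(\nabla^{i})$ gives exactly the displayed computation in the corollary statement, so $\operatorname{ch}_{p}(\nabla^{1})-\operatorname{ch}_{p}(\nabla^{0})=d_{A,\mathbb{R}}(\operatorname{cs}_{p}(\nabla^{0},\nabla^{1}))$ is $d_{A,\mathbb{R}}$-exact.

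Second I would establish closedness of the Chern character forms themselves. This is contained in the first sentence of the theorem's proof: Lemma \ref{comm_Tr_and_diff} gives $d_{A,\mathbb{R}}\circ\operatorname{Tr}_{\ast}=\operatorname{Tr}_{\ast}\circ d_{\mathbb{R}}^{\overline{\nabla}}$, while the Bianchi identity $d_{\mathbb{R}}^{\overline{\nabla}}(R^{\nabla})=0$ — together with the fact that $d_{\mathbb{R}}^{\overline{\nabla}}$ is an antiderivation compatible with composition of $\operatorname{End}E$-valued forms — yields $d_{\mathbb{R}}^{\overline{\nabla}}\bigl((R^{\nabla})^{p}\bigr)=0$, hence $d_{A,\mathbb{R}}\operatorname{ch}_{p}(\nabla)=d_{A,\mathbb{R}}\operatorname{Tr}_{\ast}((R^{\nabla})^{p})=\operatorname{Tr}_{\ast}(d_{\mathbb{R}}^{\overline{\nabla}}((R^{\nabla})^{p}))=0$. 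I would also note that $d_{A,\mathbb{R}}(\operatorname{cs}_{p}(\nabla^{0}))=0$ is the last assertion of the theorem, so closedness is consistent with the $k=1$ formula (applying $d_{A,\mathbb{R}}$ to the exactness relation gives $0=0$).

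Finally I would conclude the independence of the cohomology class. Since each $\operatorname{ch}_{p}(\nabla)$ is a $d_{A,\mathbb{R}}$-closed element of $\mathcal{A}lt_{\mathbb{R}}^{2p}(\Gamma(A);\mathscr{C}^{\infty}(M))$, and $d_{A,\mathbb{R}}$ is (via the identification $\Gamma(M\times\mathbb{R})\cong\mathscr{C}^{\infty}(M)$ noted in the introduction) the differential of the complex computing $H_{\rho_{A},\mathbb{R}}^{\bullet}(A;M\times\mathbb{R})$, the class $[\operatorname{ch}_{p}(\nabla)]$ is well defined in $H_{\rho_{A},\mathbb{R}}^{2p}(A;M\times\mathbb{R})$. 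The exactness relation from the first step shows that for any two $\mathbb{R}$-linear connections $\nabla^{0},\nabla^{1}$ the difference $\operatorname{ch}_{p}(\nabla^{1})-\operatorname{ch}_{p}(\nabla^{0})$ is a coboundary, hence $[\operatorname{ch}_{p}(\nabla^{1})]=[\operatorname{ch}_{p}(\nabla^{0})]$, which is precisely the claimed independence; one defines $\operatorname{ch}_{p}(A,E)$ to be this common class. There is essentially no obstacle here — the only point requiring a word of care is keeping track of the sign $(-1)^{k+1}$ with $k=1$, so that the formula genuinely produces a plus sign on the left and the difference $\operatorname{ch}_{p}(\nabla^{1})-\operatorname{ch}_{p}(\nabla^{0})$ (in that order) on the right, exactly as written in the corollary's display.
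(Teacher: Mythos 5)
Your proposal is correct and follows essentially the same route as the paper: the corollary's own justification is precisely the $k=1$ instance of the Chern--Simons formula (\ref{ChernSimonsformula}) with the convention $\limfunc{cs}\nolimits_{p}(\nabla^{0})=\func{ch}_{p}(\nabla^{0})$, and the closedness of $\func{ch}_{p}(\nabla)$ is obtained, exactly as you do, from Lemma \ref{comm_Tr_and_diff} together with the Bianchi identity in the proof of the theorem. No gaps; your extra remarks on the sign $(-1)^{k+1}$ and on the identification $\Gamma(M\times\mathbb{R})\cong\mathscr{C}^{\infty}(M)$ only make explicit what the paper leaves implicit.
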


In this way we have correctly defined the Chern character%
\begin{equation*}
\func{ch}\left( A,E\right) \in H_{\rho _{A},\mathbb{R}}\left( A;M\times 
\mathbb{R}\right) .
\end{equation*}

\begin{remark}
\emph{(\cite{Crainic-up to homotopy}, \cite{Crainic-Fernandes-jets}) In the
particular case we can obtain the Chern character for a non-linear
connection }$\nabla :\Gamma \left( A\right) \rightarrow 
\mathscr{CDO}%
\left( E\right) $\emph{\ of a Lie algebroid }$A$\emph{\ on a vector bundle }$%
E$\emph{, i.e. a local} $\mathbb{R}$\emph{-linear connection }$\nabla
:\Gamma \left( A\right) \rightarrow 
\mathscr{CDO}%
\left( E\right) $\emph{. In the space }$\Omega _{nl}\left( A\right) $\emph{\
of non-linear differential forms on }$A$\emph{\ (local }$\mathbb{R}$\emph{%
-linear forms on }$A$\emph{) we have the differential operator }$d_{nl}=d_{A,%
\mathbb{R}}|\Omega _{nl}^{\bullet }\left( A\right) :$\emph{\ }$\Omega
_{nl}^{\bullet }\left( A\right) \rightarrow \Omega _{nl}^{\bullet +1}\left(
A\right) $.
\end{remark}

\section{Secondary characteristic classes for $\mathbb{R}$-linear
connections and some the Chern-Simons forms for a pair of connections}

Let $E$ be a vector bundle over $M$ with a metric $h$ and $\nabla :\Gamma
\left( A\right) \rightarrow 
\mathscr{CDO}%
\left( E\right) $ be an $\mathbb{R}$-linear connection of a Lie algebroid $A$
on $E$. We define an $\mathbb{R}$-linear connection $\nabla ^{h}:\Gamma
\left( A\right) \rightarrow 
\mathscr{CDO}%
\left( E\right) $ of $A$ on $E$ such that%
\begin{equation*}
\left( \rho _{A}\circ a\right) \left( h\left( s,t\right) \right) =h\left(
\nabla _{a}s,t\right) +h\left( s,\nabla _{a}^{h}t\right) ,\ \ \ a\in \Gamma
\left( A\right) ,\ s,t\in \Gamma \left( E\right) .
\end{equation*}%
We can observe that%
\begin{equation*}
R_{a,b}^{\nabla ^{h}}=-\left( R_{a,b}^{\nabla }\right) ^{\ast },\ \ \ \ \ \
a,b\in \Gamma \left( A\right) ,
\end{equation*}%
where $\left( R_{a,b}^{\nabla }\right) ^{\ast }$ is the adjoint map to $%
R_{a,b}^{\nabla }$ with respect to $h$. Therefore we obtain the following
lemma.

\begin{lemma}
\label{Lemma_1_about_nabla_h}If $\nabla _{0}$, $\nabla _{1}$ are $\mathbb{R}$%
-linear connections of $A$ on $E$, then

\begin{itemize}
\item[(a)] $\limfunc{cs}\nolimits_{p}\left( \nabla _{0}^{h}\right) =\left(
-1\right) ^{p}\limfunc{cs}\nolimits_{p}\left( \nabla _{0}\right) $,

\item[(b)] $\limfunc{cs}\nolimits_{p}\left( \nabla _{0}^{h},\nabla
_{1}^{h}\right) =\left( -1\right) ^{p}\limfunc{cs}\nolimits_{p}\left( \nabla
_{0},\nabla _{1}\right) .$
\end{itemize}
\end{lemma}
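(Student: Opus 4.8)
The plan is to reduce everything to the single identity $R^{\nabla^h}_{a,b} = -(R^\nabla_{a,b})^\ast$ already recorded above, together with the behaviour of the fibrewise trace under transposition. First I would treat part (a). By definition $\limfunc{cs}_p(\nabla_0) = \func{ch}_p(\nabla_0) = \func{Tr}_\ast (R^{\nabla_0})^p$, and $(R^{\nabla_0})^p$ is the antisymmetrised $p$-fold composition of $R^{\nabla_0}_{a_i,a_j}$'s as spelled out in the definition of $\func{ch}_p$. Replacing $\nabla_0$ by $\nabla_0^h$ replaces each factor $R^{\nabla_0}_{a,b}$ by $-(R^{\nabla_0}_{a,b})^\ast$, so each summand acquires a sign $(-1)^p$ and, since $(\phi_1\circ\cdots\circ\phi_p)^\ast = \phi_p^\ast\circ\cdots\circ\phi_1^\ast$, becomes $(-1)^p$ times the transpose of $R^{\nabla_0}_{a_{\tau(2p-1)},a_{\tau(2p)}}\circ\cdots\circ R^{\nabla_0}_{a_{\tau(1)},a_{\tau(2)}}$. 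Now I would use that $\func{Tr}(\psi^\ast) = \func{Tr}(\psi)$ for any $\psi\in\Gamma(\func{End}E)$ (with respect to the metric $h$), and that reversing the order of the $p$ blocks of index pairs in the antisymmetrised sum is a permutation of $S_{2p}$ whose sign is $(-1)^{p(p-1)/2}\cdot(-1)^{p(p-1)/2}=+1$ on the relevant terms — more carefully, reindexing $\tau$ by this block-reversal is a bijection of $S_{2p}$ preserving $\func{sgn}\tau$ because each block is a transposition-pair and block reversal is an even permutation of the $2p$ slots when combined with the $R_{a,b}=-R_{b,a}$ symmetry. Hence $\func{Tr}_\ast(R^{\nabla_0^h})^p = (-1)^p\func{Tr}_\ast(R^{\nabla_0})^p$, which is (a).

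For part (b), the key observation is that the construction $\nabla\mapsto\nabla^h$ is compatible with lifting and with affine combinations. Concretely, if $h$ also denotes the pulled-back metric $\func{pr}_2^\ast h$ on $\func{pr}_2^\ast E$, then $(\widetilde{\nabla})^h = \widetilde{(\nabla^h)}$ by uniqueness of the lift (both satisfy the defining metric-compatibility equation over $\mathbb{R}^k\times M$ and agree on generators $\nu\circ\func{pr}_2$), and similarly $(\nabla^{\func{aff}_k})^h = (\nabla^h)^{\func{aff}_k}$ because the metric-compatibility equation is $\mathbb{R}$-linear in the connection and the affine-combination coefficients $1-\sum t_i$ and $t_i$ are real functions, so forming $h$-adjoints commutes with taking the convex/affine combination. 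Granting this, the curvature of the affine combination built from $\nabla_0^h,\nabla_1^h$ is $R^{(\nabla^{\func{aff}_1})^h}_{a,b} = -(R^{\nabla^{\func{aff}_1}}_{a,b})^\ast$, so by exactly the computation in part (a) we get $\func{ch}_p(\nabla^{\func{aff}_1\,h}) = (-1)^p\func{ch}_p(\nabla^{\func{aff}_1})$ as forms on $T\mathbb{R}^1\times A$. Integrating over $\Delta^1$ (a real-linear operation that does not see the sign) then gives $\limfunc{cs}_p(\nabla_0^h,\nabla_1^h) = (-1)^p\limfunc{cs}_p(\nabla_0,\nabla_1)$.

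The main obstacle I anticipate is bookkeeping the sign in the antisymmetrised trace, i.e. checking that block-reversal of the $p$ index-pairs is sign-neutral inside $\sum_{\tau\in S_{2p}}\func{sgn}\tau$; the clean way is to note $\func{Tr}(\phi_1\circ\cdots\circ\phi_p)=\func{Tr}(\phi_p\circ\cdots\circ\phi_1)$ for the \emph{transposes} is the same as cyclicity applied after transposing, so one can avoid reordering the blocks altogether: $\func{Tr}\big((\phi_1\circ\cdots\circ\phi_p)^\ast\big)=\func{Tr}(\phi_1\circ\cdots\circ\phi_p)$ directly, and then the only surviving sign is the $(-1)^p$ coming from the $p$ factors of $-1$. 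The second, milder, point to verify is the interchange $(\nabla^{\func{aff}_k})^h=(\nabla^h)^{\func{aff}_k}$ and $(\widetilde\nabla)^h=\widetilde{\nabla^h}$, which follows formally from uniqueness in the respective defining properties; I would state these as one-line remarks rather than expand them. With these in hand, (a) and (b) are immediate, and in fact (a) is just the special case $k=0$ of the same argument.
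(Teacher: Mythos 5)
Your argument is correct and is essentially the route the paper itself takes: the paper gives no written proof, deducing the lemma directly from the observation $R_{a,b}^{\nabla ^{h}}=-\left( R_{a,b}^{\nabla }\right) ^{\ast }$, which is exactly the identity you reduce everything to. The two details you supply --- that $\operatorname{Tr}\left( \psi ^{\ast }\right) =\operatorname{Tr}\left( \psi \right) $ disposes of the block-reversal sign so only the $\left( -1\right) ^{p}$ from the $p$ factors survives, and that $\left( \cdot \right) ^{h}$ commutes with lifting and with affine combinations (so the same curvature identity applies to $\nabla ^{\operatorname{aff}_{1}}$ before integrating over $\Delta ^{1}$) --- are precisely what the paper leaves implicit, and both checks are sound.
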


From the Chern-Simons formula (\ref{ChernSimonsformula}) and Lemma \ref%
{Lemma_1_about_nabla_h} (a) we deduce that 
\begin{eqnarray*}
d_{A,\mathbb{R}}\limfunc{cs}\nolimits_{p}\left( \nabla ,\nabla ^{h}\right)
&=&\limfunc{cs}\nolimits_{p}\left( \nabla \right) -\limfunc{cs}%
\nolimits_{p}\left( \nabla ^{h}\right) \\
&=&\limfunc{cs}\nolimits_{p}\left( \nabla \right) -\left( -1\right) ^{p}%
\limfunc{cs}\nolimits_{p}\left( \nabla \right) \\
&=&0,
\end{eqnarray*}%
because $\nabla $ is flat. In particular, we see that $\nabla ^{h}$ is also
flat.

\begin{theorem}
The cohomology class $\left[ \limfunc{cs}\nolimits_{p}\left( \nabla ,\nabla
^{h}\right) \right] \in H_{\rho _{A},\mathbb{R}}^{2p-1}\left( A\right) $ do
not depend on the choice of metric $h$.
\end{theorem}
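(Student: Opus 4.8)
The plan is to show that the difference $\limfunc{cs}_p(\nabla,\nabla^{h_0}) - \limfunc{cs}_p(\nabla,\nabla^{h_1})$ is exact in the complex $(\Omega^{\bullet}(A), d_{A,\mathbb{R}})$, hence the two cohomology classes coincide. Since $\nabla$ is flat, all the connections $\nabla$, $\nabla^{h_0}$, $\nabla^{h_1}$ are flat (the latter two by the computation preceding the statement, using Lemma \ref{Lemma_1_about_nabla_h}(a)). So the natural tool is the Chern-Simons formula (\ref{ChernSimonsformula}) applied to the triple $(\nabla^0, \nabla^1, \nabla^2) = (\nabla, \nabla^{h_0}, \nabla^{h_1})$ with $k = 2$. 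This gives
\begin{equation*}
-\,d_{A,\mathbb{R}}\bigl(\limfunc{cs}\nolimits_p(\nabla,\nabla^{h_0},\nabla^{h_1})\bigr)
= \limfunc{cs}\nolimits_p(\nabla^{h_0},\nabla^{h_1}) - \limfunc{cs}\nolimits_p(\nabla,\nabla^{h_1}) + \limfunc{cs}\nolimits_p(\nabla,\nabla^{h_0}).
\end{equation*}

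The remaining task is to control the term $\limfunc{cs}_p(\nabla^{h_0},\nabla^{h_1})$. First, note that the ambient degree works out: $\limfunc{cs}_p(\nabla^{h_0},\nabla^{h_1}) \in \mathcal{A}lt_{\mathbb{R}}^{2p-1}(\Gamma(A); \mathscr{C}^{\infty}(M))$, the same degree as the classes in question. I would argue that $\limfunc{cs}_p(\nabla^{h_0},\nabla^{h_1})$ is itself closed — indeed, applying the Chern-Simons formula again to the pair $(\nabla^{h_0},\nabla^{h_1})$ gives $d_{A,\mathbb{R}}\limfunc{cs}_p(\nabla^{h_0},\nabla^{h_1}) = \func{ch}_p(\nabla^{h_1}) - \func{ch}_p(\nabla^{h_0})$, and both Chern character forms vanish because $\nabla^{h_0}$, $\nabla^{h_1}$ are flat, so their curvatures vanish and hence $\func{ch}_p = \func{Tr}_{\ast}(R^{\nabla^{h_i}})^p = 0$ (for $p \geq 1$). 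Thus $\limfunc{cs}_p(\nabla^{h_0},\nabla^{h_1})$ is a closed form. The key point, then, is to show that its cohomology class is zero, i.e. that it is $d_{A,\mathbb{R}}$-exact. Here I would invoke the fact that $h_0$ and $h_1$ can be joined by a smooth path of metrics $h_u = (1-u)h_0 + u h_1$, $u \in [0,1]$ (a convex combination of metrics is a metric), giving a one-parameter family $\nabla^{h_u}$ of flat connections with $\nabla^{h_0}$ and $\nabla^{h_1}$ as endpoints; a standard homotopy/transgression argument — of exactly the type the Chern-Simons machinery of Section 2 is designed to produce, lifting to $T\mathbb{R} \times A$ over the interval and integrating — then exhibits $\limfunc{cs}_p(\nabla^{h_0},\nabla^{h_1})$ as $d_{A,\mathbb{R}}$ of an explicit transgression form.

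With $\limfunc{cs}_p(\nabla^{h_0},\nabla^{h_1}) = d_{A,\mathbb{R}}(\xi)$ for some $\xi \in \mathcal{A}lt_{\mathbb{R}}^{2p-2}(\Gamma(A);\mathscr{C}^{\infty}(M))$, the displayed triple identity above rearranges to
\begin{equation*}
\limfunc{cs}\nolimits_p(\nabla,\nabla^{h_0}) - \limfunc{cs}\nolimits_p(\nabla,\nabla^{h_1})
= d_{A,\mathbb{R}}\bigl(\xi - \limfunc{cs}\nolimits_p(\nabla,\nabla^{h_0},\nabla^{h_1})\bigr),
\end{equation*}
which is precisely the assertion that $[\limfunc{cs}_p(\nabla,\nabla^{h_0})] = [\limfunc{cs}_p(\nabla,\nabla^{h_1})]$ in $H_{\rho_A,\mathbb{R}}^{2p-1}(A)$.

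The main obstacle I anticipate is the exactness of $\limfunc{cs}_p(\nabla^{h_0},\nabla^{h_1})$: the Chern-Simons formalism as set up in the excerpt deals with finitely many connections $\nabla^0,\dots,\nabla^k$ and simplices $\Delta^k$, whereas the natural argument for path-independence in the metric wants a genuine one-parameter family and an interval. One must either reproduce the simplex-based transgression argument with $\Delta^1 = [0,1]$ parametrizing the metrics (which requires checking that the affine-combination construction and the Stokes formula (\ref{Stokes}) go through for the $u$-dependent connection $\widetilde{\nabla^{h_u}}$ — essentially rerunning the proof of Section 2 in this setting), or alternatively give a direct algebraic argument exploiting the relation $R^{\nabla^{h}}_{a,b} = -(R^{\nabla}_{a,b})^{\ast}$ together with Lemma \ref{Lemma_1_about_nabla_h}(b) to reduce $\limfunc{cs}_p(\nabla^{h_0},\nabla^{h_1})$ to something manifestly trivial. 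Either way, verifying the homotopy invariance at the level of $\mathbb{R}$-linear forms (no locality available) is where the real work lies; the rest is bookkeeping with the Chern-Simons formula.
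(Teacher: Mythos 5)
Your reduction via the triple $(\nabla,\nabla^{h_0},\nabla^{h_1})$ is set up correctly (the signs in the $k=2$ instance of (\ref{ChernSimonsformula}) are right, and the closedness of $\limfunc{cs}\nolimits_{p}(\nabla^{h_0},\nabla^{h_1})$ does follow from flatness of $\nabla^{h_0}$ and $\nabla^{h_1}$), but it leaves a genuine gap exactly where you flag it: the exactness of $\limfunc{cs}\nolimits_{p}(\nabla^{h_0},\nabla^{h_1})$ is never established, and given your triple identity that exactness is \emph{equivalent} to the theorem, so nothing has actually been proved. Moreover, the homotopy $h_u=(1-u)h_0+uh_1$ does not feed into the machinery of Section 2: the affine combination $\nabla^{\func{aff}_{1}}$ of $\nabla^{h_0}$ and $\nabla^{h_1}$ interpolates the \emph{connections} linearly, whereas the adjoint $\nabla^{h_u}$ of $\nabla$ with respect to $h_u$ is \emph{not} $(1-u)\nabla^{h_0}+u\nabla^{h_1}$ (the defining identity forces $h_u(s,\nabla^{h_u}_a t)=(1-u)h_0(s,\nabla^{h_0}_a t)+u\,h_1(s,\nabla^{h_1}_a t)$, which mixes the two metrics), so the ``standard transgression over $[0,1]$'' you invoke is not an instance of the simplex construction and would have to be built and justified from scratch in the $\mathbb{R}$-linear setting. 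Note also that for two flat connections $\limfunc{cs}\nolimits_{p}(\nabla^{h_0},\nabla^{h_1})$ equals a nonzero multiple of $\func{Tr}_{\ast}(\lambda^{2p-1})$ with $\lambda=\nabla^{h_1}-\nabla^{h_0}$, which is not visibly exact; the claim genuinely needs an argument.

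The paper avoids this term altogether. It fixes an auxiliary $TM$-connection $\nabla^{M}$ and sets $\nabla_{o}=\nabla^{M}\circ\rho_{A}$, a $\mathscr{C}^{\infty}(M)$-linear $A$-connection, then applies (\ref{ChernSimonsformula}) to the triples $(\nabla,\nabla^{h_j},\nabla_{o}^{h_j})$ and $(\nabla,\nabla_{o},\nabla_{o}^{h_j})$ for $j=1,2$. Lemma \ref{Lemma_1_about_nabla_h}(b) gives $\limfunc{cs}\nolimits_{p}(\nabla^{h_j},\nabla_{o}^{h_j})=(-1)^{p}\limfunc{cs}\nolimits_{p}(\nabla,\nabla_{o})$, which is independent of $j$ and so cancels when the two metrics are compared; the only surviving metric-dependent term is $\limfunc{cs}\nolimits_{p}(\nabla_{o},\nabla_{o}^{h_1})-\limfunc{cs}\nolimits_{p}(\nabla_{o},\nabla_{o}^{h_2})$, which involves only the \emph{linear} connection $\nabla_{o}$ and is exact by Proposition 1 of \cite{Crainic-Fernandes-jets}. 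In other words, the hard homotopy-invariance-in-$h$ step is deliberately pushed onto a linear connection, where it is already known. If you want to keep your route, you must prove the analogue of that proposition directly for $\mathbb{R}$-linear connections --- precisely the content you have left open.
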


\begin{proof}
Let $h_{1}$, $h_{2}$ be two metrics on $E$ and let $\nabla ^{M}$ be any $TM$%
-connection on $E$. Thus $\nabla _{o}=\nabla ^{M}\circ \rho _{A}$ is an $A$%
-connection on $E$ (i.e. a linear connection). The Chern-Simons formula (\ref%
{ChernSimonsformula}) yields%
\begin{equation}
-d_{A,\mathbb{R}}\limfunc{cs}\nolimits_{p}\left( \nabla ,\nabla
^{h_{j}},\nabla _{o}^{h_{j}}\right) =\limfunc{cs}\nolimits_{p}\left( \nabla
^{h_{j}},\nabla _{o}^{h_{j}}\right) -\limfunc{cs}\nolimits_{p}\left( \nabla
,\nabla _{o}^{h_{j}}\right) +\limfunc{cs}\nolimits_{p}\left( \nabla ,\nabla
^{h_{j}}\right)  \label{e1}
\end{equation}%
and%
\begin{equation}
-d_{A,\mathbb{R}}\limfunc{cs}\nolimits_{p}\left( \nabla ,\nabla _{o},\nabla
_{o}^{h_{j}}\right) =\limfunc{cs}\nolimits_{p}\left( \nabla _{o},\nabla
_{o}^{h_{j}}\right) -\limfunc{cs}\nolimits_{p}\left( \nabla ,\nabla
_{o}^{h_{j}}\right) +\limfunc{cs}\nolimits_{p}\left( \nabla ,\nabla
_{o}\right)  \label{e2}
\end{equation}%
for $j\in \left\{ 1,2\right\} $. Lemma \ref{Lemma_1_about_nabla_h} implies $%
\limfunc{cs}\nolimits_{p}\left( \nabla ^{h_{j}},\nabla _{o}^{h_{j}}\right)
=\left( -1\right) ^{p}\limfunc{cs}\nolimits_{p}\left( \nabla ,\nabla
_{o}\right) $. From this, (\ref{e1}) and (\ref{e2}) we get%
\begin{align*}
& \limfunc{cs}\nolimits_{p}\left( \nabla ,\nabla ^{h_{1}}\right) -\limfunc{cs%
}\nolimits_{p}\left( \nabla ,\nabla ^{h_{2}}\right) \\
& =d_{A,\mathbb{R}}\left( \limfunc{cs}\nolimits_{p}\left( \nabla ,\nabla
^{h_{2}},\nabla _{o}^{h_{2}}\right) -\limfunc{cs}\nolimits_{p}\left( \nabla
,\nabla ^{h_{1}},\nabla _{o}^{h_{1}}\right) \right) +\limfunc{cs}%
\nolimits_{p}\left( \nabla ,\nabla _{o}^{h_{1}}\right) -\limfunc{cs}%
\nolimits_{p}\left( \nabla ,\nabla _{o}^{h_{2}}\right) \\
& =d_{A,\mathbb{R}}\left( \limfunc{cs}\nolimits_{p}\left( \nabla ,\nabla
^{h_{2}},\nabla _{o}^{h_{2}}\right) -\limfunc{cs}\nolimits_{p}\left( \nabla
,\nabla ^{h_{1}},\nabla _{o}^{h_{1}}\right) \right) +d_{A,\mathbb{R}}%
\limfunc{cs}\nolimits_{p}\left( \nabla ,\nabla _{o},\nabla
_{o}^{h_{1}}\right) \\
& -d_{A,\mathbb{R}}\limfunc{cs}\nolimits_{p}\left( \nabla ,\nabla
_{o},\nabla _{o}^{h_{2}}\right) +\limfunc{cs}\nolimits_{p}\left( \nabla
_{o},\nabla _{o}^{h_{1}}\right) -\limfunc{cs}\nolimits_{p}\left( \nabla
_{o},\nabla _{o}^{h_{2}}\right) .
\end{align*}%
Because of $\nabla _{o}$ is a linear connection, Proposition 1 from \cite%
{Crainic-Fernandes-jets} yields $\limfunc{cs}\nolimits_{p}\left( \nabla
_{o},\nabla _{o}^{h_{1}}\right) $\newline
$-\limfunc{cs}\nolimits_{p}\left( \nabla _{o},\nabla _{o}^{h_{2}}\right) $
is an exact form. In this way cohomology classes of $\limfunc{cs}%
\nolimits_{p}\left( \nabla ,\nabla ^{h_{1}}\right) $ and $\limfunc{cs}%
\nolimits_{p}\left( \nabla ,\nabla ^{h_{2}}\right) $ are both equal.
\end{proof}

\begin{definition}
\emph{We call }%
\begin{equation*}
\limfunc{u}\nolimits_{2p-1}\left( A,E\right) =\left[ \limfunc{cs}%
\nolimits_{p}\left( \nabla ,\nabla ^{h}\right) \right] \in H_{\rho _{A},%
\mathbb{R}}^{2p-1}\left( A\right) ,\ \ \ p\in \left\{ 1,\ldots ,\limfunc{rank%
}E\right\} ,
\end{equation*}%
the secondary characteristic classes \emph{of an }$\mathbb{R}$\emph{-linear
connection }$\nabla :\Gamma \left( A\right) \rightarrow 
\mathscr{CDO}%
\left( E\right) $.
\end{definition}

If there exists in $E$ an invariant metric $h$ with respect to $\nabla $,
then $\nabla ^{h}=\nabla $. Then classes $\limfunc{u}\nolimits_{2p-1}\left(
A,E\right) $ are equal to zero. Hence these classes are obstructions to the
existence of an invariant metric with respect to $\nabla $.

We obtain the following theorem analogous to Proposition 2 in \cite%
{Crainic-Fernandes-jets}.

\begin{theorem}
Let $\nabla $, $\nabla _{m}$ be $\mathbb{R}$-linear connections of $A$ on $E$
and $\nabla _{m}$ be additionally metric.

\begin{itemize}
\item[(a)] If $p$ is even, then $\limfunc{u}\nolimits_{2p-1}\left(
A,E\right) =0$.

\item[(b)] If $p$ is odd, then $\limfunc{cs}\nolimits_{p}\left( \nabla
,\nabla _{m}\right) $ is a closed form and%
\begin{equation*}
\limfunc{u}\nolimits_{2p-1}\left( A,E\right) =\left[ 2\limfunc{cs}%
\nolimits_{p}\left( \nabla ,\nabla _{m}\right) \right] .
\end{equation*}
\end{itemize}
\end{theorem}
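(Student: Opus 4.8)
The plan is to reduce both parts to a single application of the Chern--Simons formula (\ref{ChernSimonsformula}) with $k=2$, after a convenient normalization of the auxiliary metric.

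Recall that $\limfunc{u}\nolimits_{2p-1}\left( A,E\right) =\left[ \limfunc{cs}\nolimits_{p}\left( \nabla ,\nabla ^{h}\right) \right] $, where $\nabla $ is flat (as is implicit in the definition), and that this class does not depend on the choice of the metric $h$ on $E$. First I would use that $\nabla _{m}$ is metric to fix a metric $h_{m}$ on $E$ with $\nabla _{m}^{h_{m}}=\nabla _{m}$, and compute the class with $h=h_{m}$, so that $\limfunc{u}\nolimits_{2p-1}\left( A,E\right) =\left[ \limfunc{cs}\nolimits_{p}\left( \nabla ,\nabla ^{h_{m}}\right) \right] $ while $\nabla _{m}^{h_{m}}=\nabla _{m}$. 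Then I would write (\ref{ChernSimonsformula}) for $k=2$ and the triple $\nabla ^{0}=\nabla $, $\nabla ^{1}=\nabla ^{h_{m}}$, $\nabla ^{2}=\nabla _{m}$:
\[
-d_{A,\mathbb{R}}\limfunc{cs}\nolimits_{p}\left( \nabla ,\nabla ^{h_{m}},\nabla _{m}\right) =\limfunc{cs}\nolimits_{p}\left( \nabla ^{h_{m}},\nabla _{m}\right) -\limfunc{cs}\nolimits_{p}\left( \nabla ,\nabla _{m}\right) +\limfunc{cs}\nolimits_{p}\left( \nabla ,\nabla ^{h_{m}}\right) .
\]
By Lemma \ref{Lemma_1_about_nabla_h}(b) applied to the pair $\left( \nabla ,\nabla _{m}\right) $ with the metric $h_{m}$, and using $\nabla _{m}^{h_{m}}=\nabla _{m}$, one has $\limfunc{cs}\nolimits_{p}\left( \nabla ^{h_{m}},\nabla _{m}\right) =\limfunc{cs}\nolimits_{p}\left( \nabla ^{h_{m}},\nabla _{m}^{h_{m}}\right) =\left( -1\right) ^{p}\limfunc{cs}\nolimits_{p}\left( \nabla ,\nabla _{m}\right) $; substituting and rearranging yields
\[
\limfunc{cs}\nolimits_{p}\left( \nabla ,\nabla ^{h_{m}}\right) =\left( 1-\left( -1\right) ^{p}\right) \limfunc{cs}\nolimits_{p}\left( \nabla ,\nabla _{m}\right) -d_{A,\mathbb{R}}\limfunc{cs}\nolimits_{p}\left( \nabla ,\nabla ^{h_{m}},\nabla _{m}\right) .
\]

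Both statements drop out of this single identity. If $p$ is even, the coefficient $1-\left( -1\right) ^{p}$ vanishes, so $\limfunc{cs}\nolimits_{p}\left( \nabla ,\nabla ^{h_{m}}\right) $ is exact and hence $\limfunc{u}\nolimits_{2p-1}\left( A,E\right) =0$, which is (a). If $p$ is odd, the coefficient equals $2$, so passing to cohomology classes gives $\limfunc{u}\nolimits_{2p-1}\left( A,E\right) =\left[ \limfunc{cs}\nolimits_{p}\left( \nabla ,\nabla ^{h_{m}}\right) \right] =\left[ 2\limfunc{cs}\nolimits_{p}\left( \nabla ,\nabla _{m}\right) \right] $. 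To complete (b) it remains to check that $\limfunc{cs}\nolimits_{p}\left( \nabla ,\nabla _{m}\right) $ is closed: the case $k=1$ of (\ref{ChernSimonsformula}) gives $d_{A,\mathbb{R}}\limfunc{cs}\nolimits_{p}\left( \nabla ,\nabla _{m}\right) =\func{ch}_{p}\left( \nabla _{m}\right) -\func{ch}_{p}\left( \nabla \right) $; here $\func{ch}_{p}\left( \nabla \right) =\func{Tr}_{\ast }\left( R^{\nabla }\right) ^{p}=0$ because $\nabla $ is flat, and Lemma \ref{Lemma_1_about_nabla_h}(a) applied to $\nabla _{m}$ (together with $\limfunc{cs}\nolimits_{p}\left( \nabla _{m}\right) =\func{ch}_{p}\left( \nabla _{m}\right) $ and $\nabla _{m}^{h_{m}}=\nabla _{m}$) gives $\func{ch}_{p}\left( \nabla _{m}\right) =\left( -1\right) ^{p}\func{ch}_{p}\left( \nabla _{m}\right) $, so $\func{ch}_{p}\left( \nabla _{m}\right) =0$ when $p$ is odd; thus $\limfunc{cs}\nolimits_{p}\left( \nabla ,\nabla _{m}\right) $ is closed and the class $\left[ 2\limfunc{cs}\nolimits_{p}\left( \nabla ,\nabla _{m}\right) \right] $ is legitimate.

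The only point requiring care is the sign bookkeeping, together with the discipline of feeding Lemma \ref{Lemma_1_about_nabla_h} exactly the pairs $\left( \nabla ,\nabla _{m}\right) $ and $\nabla _{m}$ and exactly the metric $h_{m}$ for which $\nabla _{m}$ is metric; once the normalization $\nabla _{m}^{h_{m}}=\nabla _{m}$ is in place, the whole argument is a direct substitution into the Chern--Simons formula and the $\left( -1\right) ^{p}$-identities already established, and needs no new computation involving the lifts $\widetilde{\nabla }$ or the fibre integrals $\int_{\Delta ^{k}}$.
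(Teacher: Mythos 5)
Your proof is correct and follows essentially the same route as the paper: both apply the Chern--Simons formula (\ref{ChernSimonsformula}) with $k=2$ to the triple $\left( \nabla ,\nabla ^{h},\nabla _{m}\right) $ for the metric $h$ making $\nabla _{m}$ metric, then use Lemma \ref{Lemma_1_about_nabla_h} and $\nabla _{m}^{h}=\nabla _{m}$ to get $\limfunc{cs}\nolimits_{p}\left( \nabla ,\nabla ^{h}\right) =\left( 1-\left( -1\right) ^{p}\right) \limfunc{cs}\nolimits_{p}\left( \nabla ,\nabla _{m}\right) -d_{A,\mathbb{R}}\limfunc{cs}\nolimits_{p}\left( \nabla ,\nabla ^{h},\nabla _{m}\right) $, from which both cases follow. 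Your extra verification that $\limfunc{cs}\nolimits_{p}\left( \nabla ,\nabla _{m}\right) $ is closed for odd $p$ (via the $k=1$ case and $\func{ch}_{p}\left( \nabla _{m}\right) =0$) is a small, sound addition that the paper leaves implicit.
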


\begin{proof}
Let $\nabla _{m}$ be metric connection with respect to a metric $h$. On
account of the Chern-Simons formula (\ref{ChernSimonsformula}), we have%
\begin{equation*}
-d_{A,\mathbb{R}}\limfunc{cs}\nolimits_{p}\left( \nabla ,\nabla ^{h},\nabla
_{m}\right) =\limfunc{cs}\nolimits_{p}\left( \nabla ^{h},\nabla _{m}\right) -%
\limfunc{cs}\nolimits_{p}\left( \nabla ,\nabla _{m}\right) +\limfunc{cs}%
\nolimits_{p}\left( \nabla ,\nabla ^{h}\right) .
\end{equation*}%
Now Lemma \ref{Lemma_1_about_nabla_h} leads to $\limfunc{cs}%
\nolimits_{p}\left( \nabla ^{h},\nabla _{m}\right) =\left( -1\right) ^{p}%
\limfunc{cs}\nolimits_{p}\left( \nabla ,\nabla _{m}\right) $, because $%
\nabla _{m}^{h}=\nabla _{m}$. It follows that%
\begin{eqnarray*}
\limfunc{cs}\nolimits_{p}\left( \nabla ,\nabla ^{h}\right) &=&\limfunc{cs}%
\nolimits_{p}\left( \nabla ,\nabla _{m}\right) -\limfunc{cs}%
\nolimits_{p}\left( \nabla ^{h},\nabla _{m}\right) -d_{A,\mathbb{R}}\limfunc{%
cs}\nolimits_{p}\left( \nabla ,\nabla ^{h},\nabla _{m}\right) \\
&=&\left( 1+\left( -1\right) ^{p+1}\right) \limfunc{cs}\nolimits_{p}\left(
\nabla ,\nabla _{m}\right) -d_{A,\mathbb{R}}\limfunc{cs}\nolimits_{p}\left(
\nabla ,\nabla ^{h},\nabla _{m}\right) ,
\end{eqnarray*}%
which completes the proof.
\end{proof}

\bigskip

For two $\mathbb{R}$-linear connections $\nabla ^{0}$,\thinspace $\nabla
^{1}:\Gamma \left( A\right) \rightarrow 
\mathscr{CDO}%
\left( E\right) $ of $A$ on $E$ we define an $\mathbb{R}$-linear $1$-form%
\begin{equation*}
\lambda =\nabla ^{1}-\nabla ^{0}\in \mathcal{A}lt_{\mathbb{R}}^{1}\left(
\Gamma \left( A\right) ;\Gamma \left( \func{End}E\right) \right) .
\end{equation*}%
Let us observe that 
\begin{equation}
R^{\nabla ^{1}}=R^{\nabla ^{0}}+d^{\overline{\nabla }^{0}}\lambda +\left[
\lambda ,\lambda \right] ,  \label{curvature_and_pair}
\end{equation}%
where $d^{\overline{\nabla }^{0}}$is the covariant derivative in $\mathcal{A}%
lt_{\mathbb{R}}^{\bullet }\left( \Gamma \left( A\right) ;\Gamma \left( \func{%
End}E\right) \right) $ determined by $\overline{\nabla }^{0}:\Gamma \left(
A\right) \rightarrow 
\mathscr{CDO}%
\left( \limfunc{End}E\right) $, $\overline{\nabla }_{a}^{0}=\left[ \nabla
_{a}^{0},\bullet \right] $ for all $a\in \Gamma \left( A\right) $, and $%
\left[ \lambda ,\lambda \right] \in \mathcal{A}lt_{\mathbb{R}}^{2}\left(
\Gamma \left( A\right) ;\Gamma \left( \func{End}E\right) \right) $ is given
by $\left[ \lambda ,\lambda \right] \left( a,b\right) =\left[ \lambda \left(
a\right) ,\lambda \left( b\right) \right] $ for all $a,b\in \Gamma \left(
A\right) $.

\begin{lemma}
\emph{\cite{Balcerzak}} For two $\mathbb{R}$-linear connections $\nabla ^{0}$%
,\thinspace $\nabla ^{1}:\Gamma \left( A\right) \rightarrow 
\mathscr{CDO}%
\left( E\right) $ the following properties hold: 
\begin{equation}
(R^{\nabla ^{\limfunc{aff}_{1}}})_{\frac{\partial }{\partial t},a}\left( \nu
\circ \limfunc{pr}\nolimits_{2}\right) _{|\left( t,\bullet \right) }=\lambda
\left( a\right) \left( \nu \right) ,  \label{properity_1}
\end{equation}%
\begin{equation}
(R^{\nabla ^{\limfunc{aff}_{1}}})_{a,b}\left( \nu \circ \limfunc{pr}%
\nolimits_{2}\right) _{|\left( t,\bullet \right) }=\left( 1-t\right) \cdot
R_{a,b}^{\nabla ^{0}}\left( \nu \right) +t\cdot R_{a,b}^{\nabla ^{1}}\left(
\nu \right) +\left( t^{2}-t\right) \cdot \left[ \lambda ,\lambda \right]
_{\left( a,b\right) }\left( \nu \right)  \label{properity_2}
\end{equation}%
for all $a,b\in \Gamma \left( A\right) $, $\nu \in \Gamma \left( E\right) $, 
$t\in \mathbb{R}$.
\end{lemma}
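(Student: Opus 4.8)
The statement to prove is the Lemma quoted from \cite{Balcerzak}, giving formulas \eqref{properity_1} and \eqref{properity_2} for the curvature of the affine combination $\nabla^{\func{aff}_1}$ of two $\mathbb{R}$-linear connections. Here is how I would proceed.

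\medskip

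The plan is to compute $R^{\nabla^{\func{aff}_1}}$ directly from its definition $R^{\nabla}_{X,Y} = [\![\nabla_X,\nabla_Y]\!] - \nabla_{[\![X,Y]\!]}$, evaluated on a section of the form $\nu\circ\func{pr}_2$ with $\nu\in\Gamma(E)$, and restricted to a fibre $\{(t,\bullet)\}$ over $t\in\mathbb{R}$. First I would record the two special brackets we need in the Lie algebroid $T\mathbb{R}\times A$: namely $[\![\tfrac{\partial}{\partial t}, a]\!] = 0$ and $[\![a,b]\!] = [\![a,b]\!]_A$ (lifted), since $a,b$ are lifts $0\times a$ of sections of $A$ with trivial $T\mathbb{R}$-component and $t$-independent $A$-component. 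Then using the defining identity for the affine combination,
\[
\bigl(\nabla^{\func{aff}_1}_{X,\sum r^i\otimes a^i}(\nu\circ\func{pr}_2)\bigr)(t,\bullet) = (1-t)\cdot(\nabla^0)_{\sum r^i(t,\bullet)a^i}(\nu) + t\cdot(\nabla^1)_{\sum r^i(t,\bullet)a^i}(\nu),
\]
I would plug in, for \eqref{properity_1}, $X = \tfrac{\partial}{\partial t}$ (so the $A$-part is zero) against $a$, and for \eqref{properity_2}, the pair $a,b$ with zero $T\mathbb{R}$-parts.

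\medskip

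For \eqref{properity_1}: since $[\![\tfrac{\partial}{\partial t},a]\!]=0$, the term $\nabla^{\func{aff}_1}_{[\![\cdot,\cdot]\!]}$ drops, and $R^{\nabla^{\func{aff}_1}}_{\frac{\partial}{\partial t},a}(\nu\circ\func{pr}_2) = \nabla^{\func{aff}_1}_{\frac{\partial}{\partial t}}\bigl(\nabla^{\func{aff}_1}_a(\nu\circ\func{pr}_2)\bigr) - \nabla^{\func{aff}_1}_a\bigl(\nabla^{\func{aff}_1}_{\frac{\partial}{\partial t}}(\nu\circ\func{pr}_2)\bigr)$. The key observation is that $\nabla^{\func{aff}_1}_{\frac{\partial}{\partial t}}(\nu\circ\func{pr}_2)$ has zero $A$-component of the argument, hence vanishes on the fibre by the displayed formula (both summands are $(\nabla^i)_0(\nu)=0$); while $\nabla^{\func{aff}_1}_a(\nu\circ\func{pr}_2)$ restricted to the fibre equals $(1-t)\nabla^0_a\nu + t\nabla^1_a\nu = \nabla^0_a\nu + t\,\lambda(a)(\nu)$, whose derivative in $t$ is exactly $\lambda(a)(\nu)$. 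Care is needed to justify that $\nabla^{\func{aff}_1}_{\frac{\partial}{\partial t}}$ applied to the $t$-dependent section $\nabla^{\func{aff}_1}_a(\nu\circ\func{pr}_2)$ (which is $\nu\circ\func{pr}_2$ times coefficient functions plus a genuinely $t$-varying $E$-valued section) produces the $\partial/\partial t$-derivative of its fibrewise value — this follows from the covariant-differential-operator property of $\widetilde{\nabla}$ over the base coordinate $t$, i.e. the $\partial/\partial t$ behaves as an ordinary derivative on the $\mathbb{R}$-direction.

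\medskip

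For \eqref{properity_2}: evaluate $R^{\nabla^{\func{aff}_1}}_{a,b}(\nu\circ\func{pr}_2)$ on the fibre. The bracket term gives $\nabla^{\func{aff}_1}_{[\![a,b]\!]_A}(\nu\circ\func{pr}_2)|_{(t,\bullet)} = (1-t)\nabla^0_{[\![a,b]\!]_A}\nu + t\,\nabla^1_{[\![a,b]\!]_A}\nu$. For the commutator term, I iterate: $\nabla^{\func{aff}_1}_a(\nu\circ\func{pr}_2)|_{(t,\bullet)} = (1-t)\nabla^0_a\nu + t\,\nabla^1_a\nu$, and then apply $\nabla^{\func{aff}_1}_b$ to this — but now the coefficients $(1-t),t$ are functions on $\mathbb{R}\times M$, so one must be careful only that $b$ has no $T\mathbb{R}$-component, hence $b$ does not differentiate the $t$-coefficients; thus $\nabla^{\func{aff}_1}_b\bigl((1-t)\nabla^0_a\nu + t\nabla^1_a\nu\bigr)|_{(t,\bullet)} = (1-t)^2\nabla^0_b\nabla^0_a\nu + t(1-t)\nabla^1_b\nabla^0_a\nu + t(1-t)\nabla^0_b\nabla^1_a\nu + t^2\nabla^1_b\nabla^1_a\nu$. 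Antisymmetrizing in $a,b$ and subtracting the bracket term, I collect coefficients of $(1-t)^2$, $t^2$ and the cross term $t(1-t)$. The $(1-t)^2$ and $t^2$ parts assemble into $(1-t)^2 R^{\nabla^0}_{a,b}\nu + t^2 R^{\nabla^1}_{a,b}\nu$ plus correction pieces from the bracket term (which carries only linear-in-$t$ coefficients $1-t$ and $t$, not quadratic). The remaining bookkeeping — rewriting $\nabla^1 = \nabla^0 + \lambda$ inside the cross terms and using $R^{\nabla^j}_{a,b} = [\![\nabla^j_a,\nabla^j_b]\!] - \nabla^j_{[\![a,b]\!]}$ — is the one genuinely fiddly computation: one checks that all terms combine to $(1-t)R^{\nabla^0}_{a,b}\nu + t\,R^{\nabla^1}_{a,b}\nu + (t^2-t)[\lambda,\lambda]_{(a,b)}\nu$, where the $(t^2-t)[\lambda,\lambda]$ piece arises precisely as the discrepancy between the naive $(1-t)^2,t^2$ grouping and the linear $(1-t),t$ grouping. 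The main obstacle is thus purely organizational: keeping track of which connection ($\nabla^0$ or $\nabla^1$) sits in the outer versus inner slot of each $\nabla_b\nabla_a$ term after antisymmetrization, and confirming the coefficient of $[\lambda,\lambda]_{(a,b)}$ is exactly $t^2 - t$; this is reliably done by expanding $(1-t)^2 = 1 - 2t + t^2$ and $t^2$, comparing with $1-t$ and $t$, and noting the leftover $t^2 - t$ multiplies $\nabla^0_a\nabla^0_b + \nabla^1_a\nabla^1_b - \nabla^0_a\nabla^1_b - \nabla^1_a\nabla^0_b$ symmetrized, i.e. $[\lambda(a),\lambda(b)]$ after antisymmetrization.
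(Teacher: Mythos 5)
Your computation is correct. Note that this paper contains no proof of the lemma at all --- it is quoted from the reference [Balcerzak] --- so there is no internal argument to compare against; your direct evaluation of $R^{\nabla^{\mathrm{aff}_1}}$ on sections $\nu\circ\mathrm{pr}_2$ is the expected one, and the two points you isolate are precisely what makes it work: $\nabla^{\mathrm{aff}_1}_{\partial/\partial t}$ annihilates pullback sections and hence acts as $\partial/\partial t$ on the $\mathscr{C}^{\infty}(\mathbb{R}\times M)$-coefficients (giving $\tfrac{\partial}{\partial t}\bigl(\nabla^0_a\nu+t\,\lambda(a)\nu\bigr)=\lambda(a)\nu$), while $0\times b$ has anchor tangent to $M$ and so does not differentiate the coefficients $1-t$, $t$, which yields the quadratic coefficients $(1-t)^2$, $t(1-t)$, $t^2$ whose discrepancy from $(1-t)$, $0$, $t$ is the common factor $t^2-t$ multiplying $[\nabla^0_a-\nabla^1_a,\nabla^0_b-\nabla^1_b]=[\lambda(a),\lambda(b)]$.
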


\paragraph{\textbf{The Chern-Simons forms of the first and the second rank}}

\ 

Let $\theta \in \mathcal{A}lt_{\mathbb{R}}^{1}\left( \Gamma \left( A\right)
;\Gamma \left( \func{End}E\right) \right) $, $\nabla :\Gamma \left( A\right)
\rightarrow 
\mathscr{CDO}%
\left( E\right) $ be an $\mathbb{R}$-linear connection of $A$ on $E$.
Therefore $\nabla +\theta $ is also an $\mathbb{R}$-linear $A$-connection on 
$E$, and $\limfunc{cs}\nolimits_{1}\left( \nabla ,\nabla +\theta \right) \in 
\mathcal{A}lt_{\mathbb{R}}^{1}\left( \Gamma \left( A\right) ;%
\mathscr{C}%
^{\infty }\left( M\right) \right) $ is given by $\limfunc{cs}%
\nolimits_{1}\left( \nabla ,\nabla +\theta \right) \left( a\right) =\limfunc{%
tr}\left( \theta \left( a\right) \right) $, $a\in \Gamma \left( A\right) $.
Moreover, we conclude from (\ref{properity_1}), (\ref{properity_2}) and (\ref%
{curvature_and_pair}) that%
\begin{equation*}
\limfunc{tr}\left( R^{\nabla ^{\limfunc{aff}_{1}}}\right) ^{2}\left( \frac{%
\partial }{\partial t},\bullet \right) _{|\left( t,\bullet \right) }=2%
\limfunc{tr}\left( \theta \wedge R^{\nabla ^{0}}+t\cdot \theta \wedge d_{%
\mathbb{R}}^{\overline{\nabla ^{0}}}\theta +t^{2}\cdot \theta \wedge \theta
\wedge \theta \right)
\end{equation*}%
for all $a_{1},a_{2},a_{3}\in \Gamma \left( A\right) $, $t\in \mathbb{R}$,
hence%
\begin{equation*}
\limfunc{cs}\nolimits_{2}\left( \nabla ,\nabla +\theta \right) =\limfunc{tr}%
\left( 2\theta \wedge R^{\nabla }+\theta \wedge d_{\mathbb{R}}^{\overline{%
\nabla }}\theta +\frac{2}{3}\theta \wedge \theta \wedge \theta \right) .
\end{equation*}%
If $\nabla $\ and $\nabla +\theta $ are both flat, then $d_{\mathbb{R}}^{%
\overline{\nabla }}\theta =-\theta \wedge \theta $, which then yields 
\begin{equation*}
\limfunc{cs}\nolimits_{2}\left( \nabla ,\nabla +\theta \right) =-~\frac{1}{3}%
\limfunc{tr}\left( \theta \wedge \theta \wedge \theta \right) .
\end{equation*}

For every manifold $M$ of an odd dimension $2m-1$, $\limfunc{cs}%
\nolimits_{m}\left( \nabla ,\nabla +\theta \right) $ is closed. In the case
where $M$ is a $3$-dimensional manifold, $\limfunc{cs}\nolimits_{2}\left(
\nabla ,\nabla +\theta \right) $\ is closed and is given by the above
formula; if additionally $\nabla $\ is flat, we see that%
\begin{equation}
\limfunc{cs}\nolimits_{2}\left( \nabla ,\nabla +\theta \right) =\limfunc{tr}%
\left( \theta \wedge d_{\mathbb{R}}^{\overline{\nabla }}\theta +\frac{2}{3}%
\theta \wedge \theta \wedge \theta \right) .  \label{formulaWZ}
\end{equation}%
(\ref{formulaWZ}) is a generalization of the known formula for tangent
bundles of smooth, compact, oriented, three dimensional manifolds and
standard connections (see for example \cite{Zhang}) to arbitrary rank three
vector bundles and $\mathbb{R}$-linear connections.

Moreover, we add (see \cite{Balcerzak}) that if both $\mathbb{R}$-linear
connections $\nabla ^{0}$,\thinspace $\nabla ^{1}:\Gamma \left( A\right)
\rightarrow 
\mathscr{CDO}%
\left( E\right) $ of a Lie algebroid $A$ on a vector bundle $E$ are flat,
then the Chern--Simons $\mathbb{R}$-linear form $\limfunc{cs}%
\nolimits_{p}\left( \nabla ^{0},\nabla ^{1}\right) $ is equal to $\left(
-1\right) ^{p+1}\,\frac{\,p!\left( p-1\right) !\,}{\left( 2p-1\right) !}%
\func{Tr}_{\ast }\left( \lambda ^{2p-1}\right) $. In particular, for any
flat $\mathbb{R}$-linear connection $\nabla $ of $A$ on $E$, $\nabla ^{h}$
is also flat and we conclude that the class $\limfunc{u}\nolimits_{2p-1}%
\left( A,E\right) $ is represented by the form 
\begin{equation*}
\left( -1\right) ^{p+1}\,\frac{\,p!\left( p-1\right) !\,}{\left( 2p-1\right)
!}\func{Tr}_{\ast }\left( \omega ^{2p-1}\right) ,
\end{equation*}%
where $\omega =\nabla ^{h}-\nabla \in \mathcal{A}lt_{\mathbb{R}}^{1}\left(
\Gamma \left( A\right) ;\Gamma \left( \func{End}E\right) \right) $.

\ \bigskip

\ \vspace{2cm}

{\small \hspace{-0.45cm}Bogdan Balcerzak}

{\small \hspace{-0.45cm}Institute of Mathematics}

{\small \hspace{-0.45cm}Technical University of \L \'{o}d\'{z}}

{\small \hspace{-0.45cm}W\'{o}lcza\'{n}ska 215}

{\small \hspace{-0.45cm}90-924 \L \'{o}d\'{z}, Poland}

{\small \hspace{-0.45cm}\emph{E-mail address}: bogdan.balcerzak@p.lodz.pl}


\begin{thebibliography}{99}
\bibitem{B-K-W-Primary} \textsc{B.~Balcerzak, J.~Kubarski and W.~Walas},%
\textbf{\ }\emph{Primary characteristic homomorphism of pairs of Lie
algebroids and Mackenzie algebroid}, Banach Center Publ. \textbf{54} (2001)%
\textbf{, }135--173.

\bibitem{Balcerzak} \textsc{B.~Balcerzak}, \emph{Modular classes of Lie
algebroids homomorphisms as some the Chern-Simons forms}, Univ. Iagel. Acta
Math. \textbf{47} (2009), 11--28.

\bibitem{Balcerzak-Stokes} \textsc{B.~Balcerzak}, \emph{The Generalized
Stokes theorem for }$\mathbb{R}$\emph{-linear forms on Lie algebroids},
Accepted to Journal of Applied Analysis; available as preprint \textsf{\href{http://arxiv.org/abs/1102.2594}%
{arXiv:1102.2594}}, 2011.

\bibitem{Bott} \textsc{R.~Bott}, \emph{Lectures on characteristic classes
and foliations}\textit{,} Springer Lecture Notes in Math. 279, Springer,
Berlin, 1972.

\bibitem{Crainic-up to homotopy} \textsc{M.~Crainic}, \emph{Connections up
to homotopy and characteristic classes}, Preprint arXiv,\textsf{\ \href{http://arxiv.org/abs/math/0010085v2}%
{arXiv:math/0010085v2}} (2000).

\bibitem{Crainic-Fernandes-jets} \textsc{M.~Crainic and R.~L.~Fernandes}, 
\emph{Secondary Characteristic Classes of Lie Algebroids}, In: Quantum Field
Theory and Noncommutative Geometry, Lecture Notes in Phys. 662, pp.
157--176, Springer, Berlin, 2005.

\bibitem{Evens-Lu-Weinstein} \textsc{S.~Evens, J.~H.~Lu and A.~Weinstein}, 
\emph{Transverse measures, the modular class and a cohomology pairing for
Lie algebroids}, Q. J. Math. \textbf{50\ }(1999), 417--436.

\bibitem{Fernandes} \textsc{R.~L.~Fernandes}, \emph{Lie algebroids, holonomy
and characteristic classes}, Adv. in Math. \textbf{170} (2002), 119--179.

\bibitem{Herz} \textsc{J.-C.~Herz}, Pseudo-alg\`{e}bres de Lie,\ \emph{C. R.
Math. Acad. Sci. Paris} \textbf{263} (1953), I, 1935--1937, and II,
2289--2291.

\bibitem{Higgins-Mackenzie} \textsc{Ph.~J.~Higgins and K.~C.~H.~Mackenzie}, 
\emph{Algebraic constructions in the category of Lie algebroids}, J. Algebra 
\textbf{129} (1990), 194--230.

\bibitem{Kubarski-Lyon} \textsc{J.~Kubarski}, \emph{The Chern-Weil
homomorphism of regular Lie algebroids},\ Publ. D\'{e}p. Math., Nouv. S\'{e}%
r., Univ. Claude Bernard, Lyon, 1991, 1--69.

\bibitem{Kubarski-invariant} \textsc{J.~Kubarski}, Invariant cohomology of
regular Lie algebroids, in: \emph{Analysis and Geometry in Foliated Manifolds%
} (Proceedings of the VII International Colloquium on Differential Geometry,
Santiago de Compostella, Spain, 26--30 July 1994), pp. 137--151, World Sci.
Publ., Singapore--New Yersey--London--Hong Kong, 1995.

\bibitem{Mackenzie} \textsc{K.~C.~H.~Mackenzie}, \emph{General Theory of Lie
Groupoids and Lie Algebroids},\ London Math. Soc. Lecture Note Ser. 213,
Cambridge Univ. Press, 2005.

\bibitem{Teleman} \textsc{N.~Teleman}, \emph{A characteristic ring of a Lie
algebra extension}, Atti Accad. Naz. Lincei Cl. Sci. Fis. Mat. Natur. Rend.
Lincei (8) Mat. Appl., vol. \textbf{52} (1972),{\textbf{\ }}498--506 and
708--711.

\bibitem{Zhang} \textsc{W.~Zhang}, \emph{Lectures on Chern-Weil Theory and
Witten Deformations}, Nankai Tracts Math., vol\textsf{.} 4, World Sci.
Publ., New Yersey--London--Singapore--Hong Kong, 2001.
\end{thebibliography}
\end{document}